\documentclass[a4paper]{article}

\usepackage{amsmath,amssymb,amsthm,mathtools}
\usepackage{mathdots}
\usepackage{microtype}
\usepackage[authoryear,longnamesfirst]{natbib}
\usepackage{xcolor}
\usepackage{hyperref}
\usepackage{authblk}

\newcommand{\C}{\mathbb{C}}
\newcommand{\norm}[1]{\left\lVert #1\right\rVert_2}
\newcommand{\spec}{\sigma}
\newcommand{\Kreiss}{\mathcal{K}}
\newcommand{\Power}{\mathcal{M}}
\newcommand{\D}{\mathbb{D}}
\newcommand{\R}{\mathbb{R}}
\newcommand{\Dbar}{\overline{\mathbb{D}}}
\newcommand{\tr}{\operatorname{tr}}
\newcommand{\rank}{\operatorname{rank}}

\newtheorem{theorem}{Theorem}[section]
\newtheorem{lemma}[theorem]{Lemma}

\newtheorem{corollary}[theorem]{Corollary}
\theoremstyle{definition}
\newtheorem{definition}[theorem]{Definition}
\theoremstyle{remark}
\newtheorem{remark}[theorem]{Remark}

\AtBeginDocument{\hypersetup{
  pdftitle={Inertia-Sensitive Kreiss Bounds for $J$-Selfadjoint Matrices},
  pdfauthor={Thanh Nguyen-Cung, Binh Nguyen-Tuan}
}}

\title{Inertia-Sensitive Kreiss Bounds for \texorpdfstring{$J$}{J}--Selfadjoint Matrices}

\author[1]{Thanh Nguyen-Cung%
\thanks{24thanh.nc@vinuni.edu.vn}}

\author[1]{Binh T. Nguyen%
\thanks{binh.nt2@vinuni.edu.vn
}}

\affil[1]{VinUniversity\\
Gia Lam\\
Hanoi\\
Vietnam}

\date{}

\begin{document}

\maketitle

\begin{abstract}
Let \(A\in\C^{n\times n}\) have spectrum in the closed unit disk. Its maximal power growth \(\Power(A):=\sup_{k\ge0}\norm{A^k}\) measures transient amplification, whereas the Kreiss constant \(\Kreiss(A):=\sup_{|z|>1}(|z|-1)\norm{(zI-A)^{-1}}\) measures the corresponding resolvent growth outside the disk. The classical finite-dimensional Kreiss theorem gives \(\Power(A)\le en\Kreiss(A)\), and the linear dependence on \(n\) is unavoidable for general matrices. We show that, for matrices selfadjoint with respect to an indefinite metric, the ambient dimension $n$ can be replaced by an effective dimension determined by the minimal polynomial and the inertia of the metric. Specifically, if \(A^*J=JA\), where \(J\) is a fundamental symmetry with inertia \((n-q,q)\), then $\Power(A) \le e\min\{d(A),2q+1,2(n-q)+1\}\Kreiss(A),$ where \(d(A)\) is the degree of the minimal polynomial. Our proof requires no assumption on diagonalizability or reality on the spectrum. Instead, we associate each cyclic orbit with a finite-rank selfadjoint Hankel operator and transfer its rank and inertia to a coefficient estimate. Examples based on scaled nilpotent shifts show that the linear dependence on the smaller inertia index is asymptotically sharp, even when this index is negligible relative to the matrix size and \(d(A)=n\). We also obtain scaled-disk decay estimates and weighted-norm extensions to arbitrary nonsingular Hermitian metrics.
\end{abstract}



\section{Introduction}\label{sec:introduction}

Let $\D:=\{z\in\C:|z|<1\}$ and
$\Dbar:=\{z\in\C:|z|\le1\}$. Throughout, let
$A\in\C^{n\times n}$ satisfy $\spec(A)\subset\Dbar$ where
$\sigma(A)$ denotes the spectrum of $A$. The sequence of powers $A^k$ describes the evolution of the discrete-time system $x_{k+1}=Ax_k$. Its maximal power growth is
\(\Power(A):=\sup_{k\ge0}\norm{A^k}\). This quantity measures the largest amplification that can occur over all initial states and all time indices. The spectral condition $\spec(A)\subset\Dbar$ is the basic spectral stability condition, but it does not control finite-time behavior. For example, a nonnormal matrix may have large powers even when all eigenvalues lie strictly inside the unit disk, while a nontrivial Jordan block associated with an eigenvalue on the unit circle makes $\Power(A)$ infinite. Thus, a useful stability estimate must contain information that is not visible from the spectrum alone.

The discrete-time Kreiss constant provides such information. It is defined by \(\Kreiss(A):=\sup_{|z|>1}(|z|-1)\norm{(zI-A)^{-1}}\). This constant quantifies resolvent growth outside the unit disk and gives a frequency-domain description of transient amplification. The resolvent expansion gives the elementary lower bound $\Kreiss(A)\le\Power(A)$. The main problem is therefore to obtain a converse estimate that recovers power growth from resolvent growth. The finite-dimensional Kreiss matrix theorem gives
\begin{equation}\label{eq:classical-kreiss-intro}
\Power(A)\le en\Kreiss(A).
\end{equation}

\citet{kreiss1962} introduced the resolvent condition in the stability analysis of finite-difference approximations. \citet{tadmor1981} showed that this condition yields a power bound depending at most linearly on the matrix dimension. \citet{levequeTrefethen1984} subsequently obtained the bound $\Power(A)\le 2en\Kreiss(A)$ and conjectured that the factor 2 could be removed. \citet{spijker1991} proved this conjecture, yielding the classical estimate \eqref{eq:classical-kreiss-intro}. The dimension dependence is essentially unavoidable for general matrices: the ratio \(\Power(A)/\Kreiss(A)\) can grow almost linearly with $n$ \citep{spijkerTracognaWelfert2003}. Any improvement of the classical dimension factor must therefore exploit additional structure.

Several refinements of the Kreiss matrix theorem exploit additional algebraic, geometric, or analytic structure. Let $d(A)$ denote the degree of the minimal polynomial of $A$. A basic algebraic reduction is obtained from cyclic subspaces. For \(u\in\C^n\), let
\(E_u:=\operatorname{span}\{u,Au,A^2u,\ldots\}\), equipped with the
Euclidean norm inherited from \(\C^n\). Then, \(E_u\) is invariant under
\(A\), \(\dim E_u\le d(A)\), and \((zI-A)^{-1}\) leaves \(E_u\) invariant
for \(|z|>1\). Hence
\(\Kreiss(A|_{E_u})\le\Kreiss(A)\). Applying the classical Kreiss theorem
on \(E_u\) gives, for every unit vector \(u\),
\[
\sup_{k\ge0}\norm{A^ku}
\le
e\dim(E_u)\Kreiss(A|_{E_u})
\le
ed(A)\Kreiss(A).
\]
Taking the supremum over \(u\) yields
\(\Power(A)\le ed(A)\Kreiss(A)\). 
Thus, the minimal-polynomial degree already replaces the ambient dimension through an elementary cyclic-subspace reduction.
For families of matrices of unbounded order, \citet{gorelickKranzer1976} obtained uniform stability results under a uniform bound on the degrees of the minimal polynomials. Related operator-theoretic work established functional-calculus bounds for algebraic Kreiss operators that also depend linearly on the minimal-polynomial degree \citep{vitse2005,nikolski2006}. These results identify this degree as a natural algebraic complexity parameter. In the present paper, it enters through the dimensions of cyclic subspaces.

Further extensions of the Kreiss matrix theorem concern Cesàro bounds, matrix families of nonuniformly bounded order, and more general spectral domains \citep{strikwerdaWade1991,spijkerStraetemans1996,strikwerdaWade1997,lenferinkSpijker1991,tohTrefethen1999,raouafi2018}; see also the stability-oriented accounts \citep{vanDorsselaerEtAl1993,borovykhSpijker2000}. Of particular relevance to dimension dependence, \citet{nikolski2014} obtained sublinear growth in the matrix dimension for operators with unimodular spectrum acting on uniformly convex Banach spaces. These results rely on assumptions different from the indefinite selfadjoint structure considered here.

The present paper considers a classical class from indefinite linear algebra. Let $J=J^*=J^{-1}$ be a fundamental symmetry with inertia $(n-q,q)$, and assume that $A^*J=JA$. Then, $A$ is selfadjoint with respect to the indefinite inner product induced by $J$, or equivalently $A=JH$ for a Hermitian matrix $H$. This condition does not imply normality in the Euclidean inner product. For example, if
\[
J=\begin{pmatrix}1&0\\0&-1\end{pmatrix},
\qquad
A=\begin{pmatrix}1&-1\\1&0\end{pmatrix},
\]
then $A^*J=JA$, whereas $A^*A\ne AA^*$. Thus, Euclidean normality cannot be used to control the powers of \(A\). The sharpness examples in Section~\ref{sec:sharpness} show that large power and resolvent growth can indeed occur within this class. Classical canonical-form theory shows that the positive and negative indices of $J$ constrain the generalized eigenspaces associated with nonreal eigenvalues and the possible Jordan structures at real eigenvalues \citep{azizovIokhvidov1989,gohbergLancasterRodman2005,lancasterRodman2005}.

Finite-rank selfadjoint Hankel operators and moment sequences with finitely many negative squares have been studied in indefinite moment problems and Pontryagin-space realization theory \citep{bergChristensenMaserick1988,yafaev2015,derkachHassiDeSnoo2012}. The connection here is that each cyclic orbit of $A$ defines a finite-rank selfadjoint Hankel operator $H_\rho=W_\rho^*JW_\rho \ $, whose rank is controlled by the cyclic dimension, and whose positive and negative indices are controlled by the inertia of $J$. This transfer of structural information is the main mechanism behind the effective-dimension bound.

Existing finite-dimensional Kreiss bounds do not quantify how the positive and negative indices of an indefinite
metric affect power growth for matrices that are selfadjoint with respect
to that metric. The main question of this paper is therefore \textit{whether the a quantity can replace ambient dimension in the Kreiss estimate depending on the inertia of \(J\).}

We answer this question affirmatively. Define
$\nu_J(A):=\min\{d(A),2q+1,2(n-q)+1\}$.
Theorem~\ref{thm:main} proves that, for every \(k\ge1\),
\begin{equation}\label{eq:main-pointwise-intro}
\norm{A^k}
\le
\nu_J(A)\left(1+\frac1k\right)^k\Kreiss(A).
\end{equation}
We shall also use the classical time-index estimate (Lemma~\ref{lem:time-index}), which is the unit-disk specialization of the Faber-polynomial Kreiss estimate of \citet{tohTrefethen1999}; see also \citet{raouafi2018}.Together with Theorem~\ref{thm:main}, this yields Corollary~\ref{cor:time-inertia}:
\begin{equation}\label{eq:main-intro}
\norm{A^k}
\le
e\min\{k+1,\nu_J(A)\}\Kreiss(A),
\qquad k\ge0.
\end{equation}
The term \(d(A)\) recovers the cyclic-subspace reduction discussed above. The new feature is the additional control by \(2q+1\) and
\(2(n-q)+1\). Since \(d(A)\le n\), one has \(\nu_J(A)\le n\), the result is not weaker than the classical finite-dimensional estimate.
It improves the minimal-polynomial bound whenever
$2\min\{q,n-q\}+1<d(A)$. 
In particular, the effective-dimension factor remains bounded along a
sequence of growing matrices if the smaller inertia index remains
bounded. If \(|(n-q)-q|\le1\) and \(d(A)=n\), then \(\nu_J(A)=n\), and the theorem recovers the order of the classical bound. 

Importantly, this refinement requires neither diagonalizability nor a real spectrum. The proof uses the selfadjoint Hankel pullback $H_\rho=W_\rho^*JW_\rho$: its rank is controlled by the cyclic dimension, while its positive and negative indices are controlled by those of $J$. Lemma~\ref{lem:hankel-effective} converts these three structural bounds into a coefficient estimate for $u^*JA^ku$, and the resolvent controls the corresponding boundary generating function. A natural remaining question is \textit{whether the linear dependence on the smaller inertia index can be improved.}

The sharpness results show that it cannot, in general.  The sharpness construction is based on the classical scaled nilpotent shift \citep{levequeTrefethen1984,spijkerTracognaWelfert2003}. Theorem~\ref{thm:sharpness} realizes this core as a $J$-selfadjoint matrix with inertia $(q+1,q)$ and shows that both $\Power(A)$ and $\Kreiss(A)$ are preserved under zero padding. The core construction has both \(d(A)=2q+1\) and smaller-inertia factor \(2q+1\), therefore, it does not by itself distinguish the inertia improvement from the minimal-polynomial reduction. Corollary~\ref{cor:sharpness-full-degree} resolves this issue by replacing the zero block with a diagonal Hermitian contraction having distinct nonzero eigenvalues. The resulting examples satisfy $d(A)=n$; moreover, ambient dimension can be chosen so that \(q=o(n)\), while $\nu_J(A) = 2q + 1$.  Thus, the linear dependence on the smaller inertia index remains asymptotically sharp even when the minimal-polynomial degree equals the ambient dimension.

Two further consequences extend the main estimate. For $\gamma \in (0, 1)$, if $\rho(A)<\gamma$, Corollary~\ref{cor:geometric-decay} gives a geometric decay bound in terms of the scaled Kreiss constant $\Kreiss_\gamma(A)$. Corollary~\ref{cor:metric} transfers the result from a fundamental symmetry to an arbitrary nonsingular Hermitian metric through a canonical similarity transformation and the corresponding weighted norm. Both extensions preserve the same effective-dimension formula. 

The paper is organized as follows. Section~\ref{sec:preliminaries} introduces the notation and basic definitions. Section~\ref{sec:hankel} proves the Hankel coefficient estimate. Section~\ref{sec:kreiss} establishes the pointwise and uniform inertia-sensitive Kreiss bounds. Section~\ref{sec:sharpness} constructs asymptotically sharp examples and shows that the linear dependence on the smaller inertia index cannot be improved. Section~\ref{sec:consequences} gives the scaled-disk and general-metric consequences.

\section{Notation and basic definitions}\label{sec:preliminaries}

For vectors in $\C^n$ and sequences in $\ell^2(\mathbb N_0)$, the notation $\norm{\cdot}$ denotes the Euclidean norm and the $\ell^2$-norm, respectively. For matrices, $\norm{\cdot}$ denotes the induced spectral norm. The spectrum and spectral radius of a matrix $A$ are denoted by $\spec(A)$ and $\rho(A)$, respectively. We write
\[
\D:=\{z\in\C:|z|<1\},\qquad \Dbar:=\{z\in\C:|z|\le1\}.
\]
The Hilbert-space inner product is linear in its first argument. For a bounded operator $T$ on $\ell^2(\mathbb N_0)$, the notation $\lVert T\rVert$ denotes the operator norm induced by the $\ell^2$-norm. If $T$ is trace class, then $\lVert T\rVert_1$ denotes its trace norm.

Let $\mathbb N_0:=\{0,1,2,\ldots\}$, and let $(e_j)_{j\ge0}$ denote the standard orthonormal basis of $\ell^2(\mathbb N_0)$.

\begin{definition}\label{def:hankel-operator}
A bounded operator $H$ on $\ell^2(\mathbb N_0)$ is called a \emph{Hankel operator} if there exists a sequence $(b_m)_{m\ge0}$ such that
\[
\langle He_j,e_i\rangle=b_{i+j},
\qquad i,j\ge0.
\]
Equivalently, the matrix of $H$ in the standard basis has the form $H=[b_{i+j}]_{i,j\ge0}$,
so its entries are constant along the anti-diagonals. The associated power series
\[
F_H(w):=\sum_{m\ge0}b_mw^m,
\qquad w\in\D,
\]
is called the \emph{analytic generating function} of $H$. Since $(b_m)_{m\ge0}$ is the first column of the matrix of $H$, it belongs to $\ell^2(\mathbb N_0)$, and hence the series defining $F_H$ converges absolutely at every point of $\D$.

The Hankel operator $H$ is selfadjoint if and only if $b_m\in\R$ for every $m\ge0$. If $H$ is compact and selfadjoint, we denote by $\kappa_-(H)$ and $\kappa_+(H)$ the numbers of its negative and positive eigenvalues, respectively, counted with multiplicity.
\end{definition}

\begin{definition}\label{def:J-selfadjoint}
A matrix $J\in\C^{n\times n}$ is a \emph{fundamental symmetry} if $J=J^*=J^{-1}$. Its eigenvalues belong to $\{-1,1\}$. We denote the numbers of negative and positive eigenvalues by $\operatorname{ind}_-(J)$ and $\operatorname{ind}_+(J)$, respectively. We write $q:=\operatorname{ind}_-(J)=\dim\ker(J+I)$, so the inertia of $J$ is $(n-q,q)$. A subspace $L\subset\C^n$ is called $J$-negative if $x^*Jx<0$ for every nonzero $x\in L$, and it is called $J$-positive if $x^*Jx>0$ for every nonzero $x\in L$. By the spectral theorem, $\operatorname{ind}_-(J)$ and $\operatorname{ind}_+(J)$ are the maximal dimensions of $J$-negative and $J$-positive subspaces, respectively. A matrix $A\in\C^{n\times n}$ is \emph{$J$-selfadjoint} if $A^*J=JA$. Equivalently, $A$ is selfadjoint with respect to the indefinite inner product $[x,y]_J:=y^*Jx$.
\end{definition}

The identity $A^*J=JA$ is also equivalent to the existence of a Hermitian matrix $H$ such that $A=JH$, namely $H=JA$. The same matrix $A$ is also $(-J)$--selfadjoint. Since the negative index of $-J$ is $n-q$, a one-sided inertia estimate can be applied with either signature index. For standard background on finite-dimensional indefinite inner-product spaces, see \citet{bognar1974}.

\begin{definition}\label{def:kreiss}
Let $A\in\C^{n\times n}$ satisfy $\spec(A)\subset\Dbar$. Its \emph{maximal power growth} and \emph{discrete-time Kreiss constant} are
\[
\Power(A):=\sup_{k\ge0}\norm{A^k},\qquad \Kreiss(A):=\sup_{|z|>1}(|z|-1)\norm{(zI-A)^{-1}}.
\]
We allow either quantity to take the value $+\infty$.
\end{definition}

The resolvent expansion gives $\Kreiss(A)\le\Power(A)$. Moreover, $\Kreiss(A)\ge1$, because
\[
(r-1)\norm{(rI-A)^{-1}}\longrightarrow1\qquad (r\to\infty).
\]
The inclusion $\spec(A)\subset\Dbar$ does not imply $\Kreiss(A)<\infty$. For example, a nontrivial Jordan block at a unimodular eigenvalue makes the constant infinite. All estimates below are interpreted in the extended sense in this case.

\begin{definition}\label{def:effective-dimension}
Let $m_A$ denote the minimal polynomial of $A$, and set
\[
d(A):=\deg m_A,\qquad \nu_J(A):=\min\{d(A),2q+1,2(n-q)+1\}.
\]
For $\rho(A)<\gamma$, define
\[
\Kreiss_\gamma(A):=\sup_{|z|>\gamma}(|z|-\gamma)\norm{(zI-A)^{-1}}.
\]
\end{definition}

The strict inequality $\rho(A)<\gamma$ implies $\Kreiss_\gamma(A)<\infty$. The quantity $d(A)$ bounds the dimension of every cyclic subspace generated by $A$. This is the form in which the minimal polynomial enters the proof.

\section{Coefficient estimate for selfadjoint Hankel operators}
\label{sec:hankel}

The following lemma is the main analytic ingredient in the proof of
Theorem~\ref{thm:main}. Background on finite-rank and trace-class Hankel
operators can be found in
\citet{bonsallWalsh1986,peller2003,yafaev2015}; see also
\citet{simon2005} for trace ideals. Connections with Hankel moment
sequences, finite negative squares, and indefinite moment problems are
developed in
\citet{bergChristensenMaserick1988,derkachHassiDeSnoo2012}.

\begin{lemma}
\label{lem:hankel-effective}
Let $H=[b_{i+j}]_{i,j\ge0}$ be a finite-rank selfadjoint Hankel operator on
$\ell^2(\mathbb N_0)$, and let $F(w):=\sum_{m\ge0}b_mw^m$. Suppose that
$F$ extends continuously to $\Dbar$. Then, for every $k\ge0$,
\[
(k+1)|b_k|
\le
\min\{\rank(H),2\kappa_-(H)+1,2\kappa_+(H)+1\}
\max_{|w|=1}|F(w)|.
\]
\end{lemma}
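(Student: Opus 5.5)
The plan is to reduce the estimate to a bound on the total variation of a rational function on the unit circle, in the spirit of the Spijker/LeVeque–Trefethen proof of the classical Kreiss theorem, and then to bound that total variation in two ways: by $\rank(H)$, and by $2\kappa_\mp(H)+1$. Throughout I write $r:=\rank(H)$ and $M:=\max_{|w|=1}|F(w)|$.

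\textbf{Step 1 (Kronecker).} By Kronecker's theorem, $H$ has finite rank $r$ if and only if $\sum_{m\ge0}b_m\zeta^{-m-1}$ is a strictly proper rational function of McMillan degree $r$. Rewriting in $w=1/\zeta$, this gives
\[
F=P/Q,\qquad \deg Q\le r,\qquad \deg P\le r-1 .
\]
Since $(b_m)\in\ell^2$ and $F$ extends continuously to $\Dbar$, $Q$ has no zeros in $\Dbar$. Because all $b_m$ are real, the poles are symmetric under conjugation and $\overline{F(\bar w)}=F(w)$.

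\textbf{Step 2 (coefficient to total variation).} Set $G(w):=wF(w)=wP(w)/Q(w)$, which is rational of degree at most $r$. Its derivative has coefficient $(k+1)b_k$ at $w^k$, so
\[
(k+1)|b_k|\le \frac{1}{2\pi}\int_0^{2\pi}\Big|\tfrac{d}{d\theta}G(e^{i\theta})\Big|\,d\theta .
\]
Note that $\max_{|w|=1}|G|=M$. Spijker's lemma (the arc length of the image of the circle under a rational map of degree $r$ is at most $2\pi r\max|G|$) then gives the bound $r M$. This settles the $\rank(H)$ entry of the minimum.

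\textbf{Step 3 (inertia entries).} Since $H$ is also $(-1)$ times the Hankel operator of $-F$, it suffices to prove the bound with $2\kappa_-(H)+1$. I would use Spijker's averaging identity
\[
\int_0^{2\pi}|G'|\,d\theta=\frac14\int_0^{2\pi}\!\!\int_0^{2\pi}\Big|\tfrac{d}{d\theta}\operatorname{Re}\big(e^{-i\alpha}G(e^{i\theta})\big)\Big|\,d\theta\,d\alpha .
\]
The real trigonometric function $\theta\mapsto\operatorname{Re}(e^{-i\alpha}G)$ is bounded by $M$. Its variation is therefore at most $2M$ times the number of its monotone pieces, so it suffices to show that for a.e.\ $\alpha$ it has at most $2(2\kappa_-+1)$ critical points on the circle. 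For comparison, in the rank case the count is $2r$, which reproduces Step 2.

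To get the inertia count, I would use the signature structure of finite-rank real Hankel forms, namely the Iohvidov–Krein–Langer description. After a partial-fraction decomposition, $F=F_0+F_+$, where:
\begin{itemize}
\item $F_+(w)=\sum_j c_j/(1-x_jw)$ with $c_j>0$ and real $x_j\in(-1,1)$, so $F_+$ is a Markov/Stieltjes function and carries only positive squares;
\item $F_0$ collects the nonreal pole pairs, the higher-order real poles and the negatively weighted real poles, each of which forces negative squares, so that $\deg F_0\le 2\kappa_-(H)$.
\end{itemize}
The Markov part has interlacing zeros and poles and maps the circle in a "monotone" way. The intended consequence is that adding $F_+$ contributes at most $2$ extra critical points of $\operatorname{Re}(e^{-i\alpha}wF)$, that is, effectively one extra unit of degree, rather than $2\deg F_+$. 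This would give the count $2\deg F_0+2\le 2(2\kappa_-+1)$ and hence $(k+1)|b_k|\le(2\kappa_-+1)M$.

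\textbf{Main obstacle.} Step 3 is the crux, specifically the critical-point count for $\operatorname{Re}(e^{-i\alpha}w(F_0+F_+))$. What I would try first is to exploit that $\operatorname{Im}F_+(w)$ has the sign of $\operatorname{Im}w$ on the circle. This lets one apply an argument-principle count to $(wF)'$ with the Markov part treated as a single "degree-one-like" Herglotz term.

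If that fails, the fallback is to bound $\int|F_+'|$ directly by $M$ up to the $F_0$ contribution. Here one uses that positive moment sequences satisfy $|b_k^{+}|\le b_0^{+}$, where $b^{+}$ denotes the coefficient sequence of $F_+$, and that $F_+(\pm1)$ controls $b_0^{+}$. This route requires care, because $\max|F_+|$ is not obviously bounded by $M$ when $F_0$ and $F_+$ cancel.
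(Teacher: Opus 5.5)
\paragraph{Verdict.} Your Steps 1 and 2 are correct. They give the $\rank(H)$ entry by a heavier route than the paper, which simply uses $\lVert H\rVert_1\le\rank(H)\lVert H\rVert\le\rank(H)M$. The genuine gap is Step 3. It is not proved, and the sufficient condition you reduce it to is false already when $\kappa_-(H)=0$.

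\paragraph{Why Step 3 fails.} For $\kappa_-(H)=0$ you would need $\operatorname{Re}(e^{-\mathrm{i}\alpha}wF(w))$ to have at most two critical points on the circle for a.e.\ $\alpha$. That amounts to requiring the image curve of $wF$ to be convex. Take $b_m=0.9^m+(-0.9)^m$.
\begin{itemize}
\item $H$ is a sum of two positive rank-one Hankel operators, so $\kappa_-(H)=0$, and $F(w)=2/(1-0.81w^2)$.
\item On the circle, $wF(e^{\mathrm{i}\theta})=2/(0.19\cos\theta-1.81\,\mathrm{i}\sin\theta)$, which is twice the reciprocal of an eccentric ellipse.
\item Its imaginary part $3.62\sin\theta/(0.0361+3.24\sin^2\theta)$ climbs to about $5.29$ (at $\sin\theta\approx0.106$), dips to about $1.10$ at $\theta=\pi/2$, and climbs again.
\end{itemize}
Hence, for all $\alpha$ near $\pi/2$, there are six nondegenerate critical points, and values in $(1.1,5.29)$ are attained four times. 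So neither a critical-point count of $2$ nor a Banach-indicatrix count of $2$ holds on a set of $\alpha$ of positive measure.

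Since $\deg(wF)=2$ here, the same example refutes your remark that the rank case has $2r$ critical points. Spijker's argument bounds the number of \emph{preimages} of each value by $2r$, not the number of critical points.

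Your structural claim $\deg F_0\le2\kappa_-(H)$ also fails. For $F=(1-xw)^{-3}$ with real $x\ne0$, the pole has order $3$ but $\kappa_-(H)=1$. Indeed, the leading $3\times3$ section is congruent to $[\binom{i+j+2}{2}]_{i,j=0}^{2}$, whose leading minors are $1,-3,-1$.

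Finally, as you suspected, estimating $F_0$ and $F_+$ separately cannot work. The two pieces can both be huge while $M$ is small.

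\paragraph{The missing idea.} Estimate the trace norm of the \emph{whole} operator instead of studying the geometry of the image curve.
\begin{itemize}
\item From $\langle Hx,y\rangle=\frac{1}{2\pi}\int_0^{2\pi}F(e^{\mathrm{i}t})X\overline{Y}\,dt$ and Parseval, one gets $\lVert H\rVert\le M$.
\item From $\tr H=\sum_j b_{2j}=\tfrac12\bigl(F(1)+F(-1)\bigr)$, one gets $|\tr H|\le M$.
\item Hence $\lVert H\rVert_1=\tr H+2\sum_{\lambda<0}|\lambda|\le(2\kappa_-(H)+1)M$, and the same argument applied to $-H$ gives the $\kappa_+$ entry.
\item Then $(k+1)b_k=\tr(R_kH)$, with $R_k$ the reversal on $\operatorname{span}\{e_0,\dots,e_k\}$, gives $(k+1)|b_k|\le\lVert H\rVert_1$.
\end{itemize}

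Your Step 2 quantity is compatible with this. Write $H=\sum_l\lambda_l\langle\cdot,\phi_l\rangle\phi_l$ with orthonormal $\phi_l$, and set $\Phi_l(w)=\sum_i\phi_l(i)w^i$ and $\Psi_l(w)=\sum_j\overline{\phi_l(j)}w^j$. Then $(wF)'=\sum_l\lambda_l\Phi_l\Psi_l$, so $\frac{1}{2\pi}\int_0^{2\pi}|(wF)'(e^{\mathrm{i}\theta})|\,d\theta\le\lVert H\rVert_1$. So your reduction loses nothing. However, the inertia factor still has to come from the trace-norm identity above, not from counting critical points.
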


\begin{proof}
Set $M:=\max_{|w|=1}|F(w)|$. We first show that $M$ controls the operator
norm of $H$.

For every $m\ge0$ and $0<r<1$, Cauchy's coefficient formula gives
\[
b_mr^m=\frac{1}{2\pi}\int_0^{2\pi}
F(re^{\mathrm{i}t})e^{-\mathrm{i}mt}\,dt.
\]
Since $F$ is continuous on the compact set $\Dbar$, the functions
$F(re^{\mathrm{i}t})$ converge uniformly to $F(e^{\mathrm{i}t})$ as
$r\to1^-$. Therefore,
\begin{equation}\label{eq:hankel-fourier-coefficient}
b_m=\frac{1}{2\pi}\int_0^{2\pi}
F(e^{\mathrm{i}t})e^{-\mathrm{i}mt}\,dt.
\end{equation}

Let $x=(x_j)_{j\ge0}$ and $y=(y_i)_{i\ge0}$ be finitely supported, and
define
\[
X(e^{\mathrm{i}t}):=\sum_{j\ge0}x_je^{-\mathrm{i}jt},
\qquad
Y(e^{\mathrm{i}t}):=\sum_{i\ge0}y_ie^{\mathrm{i}it}.
\]
Using \eqref{eq:hankel-fourier-coefficient} with $m=i+j$, and recalling
that the inner product is linear in its first argument, we obtain
\[
\begin{aligned}
\langle Hx,y\rangle
&=\sum_{i,j\ge0}b_{i+j}x_j\overline{y_i} \\
&=\frac{1}{2\pi}\int_0^{2\pi}
F(e^{\mathrm{i}t})
\left(\sum_{j\ge0}x_je^{-\mathrm{i}jt}\right)
\left(\sum_{i\ge0}\overline{y_i}e^{-\mathrm{i}it}\right)\,dt \\
&=\frac{1}{2\pi}\int_0^{2\pi}
F(e^{\mathrm{i}t})X(e^{\mathrm{i}t})
\overline{Y(e^{\mathrm{i}t})}\,dt.
\end{aligned}
\]
The interchange of the sums and the integral is valid because $x$ and
$y$ are finitely supported. Since $|F(e^{\mathrm{i}t})|\le M$, the
Cauchy--Schwarz inequality gives
\[
|\langle Hx,y\rangle|
\le
M
\left(\frac{1}{2\pi}\int_0^{2\pi}|X(e^{\mathrm{i}t})|^2\,dt\right)^{1/2}
\left(\frac{1}{2\pi}\int_0^{2\pi}|Y(e^{\mathrm{i}t})|^2\,dt\right)^{1/2}.
\]
By Parseval's identity,
\[
\frac{1}{2\pi}\int_0^{2\pi}|X(e^{\mathrm{i}t})|^2\,dt=\norm{x}^2,
\qquad
\frac{1}{2\pi}\int_0^{2\pi}|Y(e^{\mathrm{i}t})|^2\,dt=\norm{y}^2.
\]
Therefore, $|\langle Hx,y\rangle|\le M\norm{x}\norm{y}$. Since finitely
supported sequences are dense in $\ell^2(\mathbb N_0)$, the estimate
extends to all $x,y\in\ell^2(\mathbb N_0)$. Taking the supremum over unit
vectors $x$ and $y$ gives
\begin{equation}\label{eq:hankel-operator-norm}
\lVert H\rVert
=
\sup_{\norm{x}=\norm{y}=1}|\langle Hx,y\rangle|
\le M.
\end{equation}

We next estimate the trace of $H$. Since $H$ has finite rank, it is trace
class. In the standard orthonormal basis,
\[
\sum_{j\ge0}|b_{2j}|
=
\sum_{j\ge0}|\langle He_j,e_j\rangle|
\le
\lVert H\rVert_1.
\]
Thus $\sum_{j\ge0}b_{2j}$ converges absolutely, and
$\tr H=\sum_{j\ge0}b_{2j}$. Moreover, for every $0<r<1$,
\[
F(r)+F(-r)=2\sum_{j\ge0}b_{2j}r^{2j}.
\]
Since $\sum_{j\ge0}|b_{2j}|<\infty$, dominated convergence gives
\begin{equation}\label{eq:hankel-trace}
\tr H=\frac{F(1)+F(-1)}{2}.
\end{equation}
It follows that $|\tr H|\le M$.

We now derive three bounds for the trace norm of $H$. The rank bound
follows from
\[
\lVert H\rVert_1\le\rank(H)\lVert H\rVert\le\rank(H)M.
\]

For the estimate involving the negative index, let $\lambda_j(H)$ denote
the nonzero eigenvalues of $H$, counted with multiplicity. Since $H$ is
selfadjoint,
\[
\lVert H\rVert_1
=
\tr H+2\sum_{\lambda_j(H)<0}|\lambda_j(H)|.
\]
There are $\kappa_-(H)$ negative eigenvalues, counted with multiplicity,
and every eigenvalue satisfies $|\lambda_j(H)|\le\lVert H\rVert$. Hence
\[
\lVert H\rVert_1
\le
|\tr H|+2\kappa_-(H)\lVert H\rVert
\le
\bigl(2\kappa_-(H)+1\bigr)M.
\]
Applying the same argument to $-H$, whose negative index is
$\kappa_+(H)$ and whose generating function is $-F$, gives
\[
\lVert H\rVert_1\le\bigl(2\kappa_+(H)+1\bigr)M.
\]
Combining the three estimates, we obtain
\begin{equation}\label{eq:hankel-trace-norm-effective}
\lVert H\rVert_1
\le
\min\{\rank(H),2\kappa_-(H)+1,2\kappa_+(H)+1\}M.
\end{equation}

It remains to recover $b_k$ from a trace identity. For $k\ge0$, define
\[
R_ke_j=
\begin{cases}
e_{k-j}, & 0\le j\le k,\\
0, & j>k.
\end{cases}
\]
The operator $R_k$ is selfadjoint, has finite rank, and satisfies
$\lVert R_k\rVert=1$. Since $R_kH$ is trace class,
\[
\begin{aligned}
\tr(R_kH)
&=\sum_{j\ge0}\langle R_kHe_j,e_j\rangle
=\sum_{j\ge0}\langle He_j,R_ke_j\rangle \\
&=\sum_{j=0}^k\langle He_j,e_{k-j}\rangle
=(k+1)b_k.
\end{aligned}
\]
Therefore,
\[
(k+1)|b_k|
=
|\tr(R_kH)|
\le
\lVert R_kH\rVert_1
\le
\lVert R_k\rVert\lVert H\rVert_1
=
\lVert H\rVert_1.
\]
Using \eqref{eq:hankel-trace-norm-effective} and the definition of $M$
proves the result.
\end{proof}

\begin{remark}
\label{rem:hankel-novelty}
Finite-rank selfadjoint Hankel operators and Hankel moment sequences with
finitely many negative squares have a well-developed structural theory;
see, for example,
\citet{bergChristensenMaserick1988,yafaev2015,
derkachHassiDeSnoo2012}. Lemma~\ref{lem:hankel-effective} combines the
symbol bound $\lVert H\rVert\le\max_{|w|=1}|F(w)|$, the trace identity
\eqref{eq:hankel-trace}, and rank- and inertia-sensitive estimates for the
trace norm.

Its role in this paper is that the orbit pullback
$H_\rho=W_\rho^*JW_\rho$ inherits its rank from the cyclic subspace
generated by the orbit, while its positive and negative indices are
controlled by the two inertia indices of $J$.
\end{remark}

\section{Inertia-sensitive Kreiss bounds}\label{sec:kreiss}

\begin{lemma}\label{lem:time-index}
Let $A\in\C^{n\times n}$ satisfy $\spec(A)\subset\Dbar$. Then, for every $k\ge0$,
\[
\norm{A^k}\le (k+1)\left(1+\frac1{k}\right)^{k}\Kreiss(A)<e(k+1)\Kreiss(A).
\]
\end{lemma}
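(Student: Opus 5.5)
The plan is to use the Cauchy integral representation of $A^k$ on a circle of radius $r>1$, integrate by parts once, and then apply the classical Spijker estimate for the arc length of the image of a circle under a rational function. The case $k=0$ is trivial: $\norm{I}=1\le\Kreiss(A)$, and with the convention $(1+1/0)^0=1$ the bound reads $\norm{A^0}\le\Kreiss(A)$, which holds by the remark after Definition~\ref{def:kreiss}. Assume $k\ge1$ and $\Kreiss(A)<\infty$; otherwise there is nothing to prove.

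For $k\ge1$ and $r>1$, start from
\[
A^k=\frac{1}{2\pi\mathrm{i}}\int_{|z|=r}z^k(zI-A)^{-1}\,dz .
\]
Integrating by parts gives
\[
A^k=-\frac{1}{2\pi\mathrm{i}(k+1)}\int_{|z|=r}z^{k+1}\,\frac{d}{dz}(zI-A)^{-1}\,dz .
\]
Fix unit vectors $u,v$ and set $R(z):=v^*(zI-A)^{-1}u$. This is a rational function of degree at most $n$. Then
\[
|v^*A^ku|\le \frac{r^{k+1}}{2\pi(k+1)}\int_0^{2\pi}|R'(re^{\mathrm{i}t})|\,r\,dt .
\]
The integral on the right is the arc length of the image of the circle $|z|=r$ under $R$.

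The naive route would bound this arc length by Spijker's lemma, which gives at most $2\pi n\max_{|z|=r}|R(z)|$. That yields $n$ and is not what we want, since the statement has a factor $(k+1)$ instead of $n$. The fix is to apply the lemma not to $R$ but to the truncated function. Write
\[
R(z)=\sum_{m\ge0}v^*A^mu\,z^{-m-1}.
\]
In the Cauchy integral only the coefficient of $z^{-k-1}$ contributes. Hence we may replace $R$ by the polynomial part
\[
P(z):=\sum_{m=0}^{k}v^*A^mu\,z^{-m-1}.
\]
This is a rational function of degree at most $k+1$ in $z$, a polynomial in $1/z$. The difficulty is that $P$ is no longer bounded by the resolvent.

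Instead, I would follow the Toh--Trefethen Faber approach. With $w=1/z$, the quantity $v^*A^ku$ is the $(k+1)$-st Taylor coefficient of $g(w):=wR(1/w)$. Convolving $g$ with a Fej\'er-type kernel $K_k(w)$ of degree $k$ keeps that coefficient while producing a degree-$k$ polynomial. A cleaner alternative is to pick the extremal $r=1+1/k$ and use Spijker's arc-length lemma for the polynomial $z^{k+1}$ against $R$, after a second integration by parts that moves the derivative onto $z^{k+1}$. For that monomial the arc length is exactly $2\pi(k+1)r^{k+1}$.

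Concretely, the estimate I aim for is
\[
|v^*A^ku|\le\frac{1}{2\pi}\int_0^{2\pi}|R(re^{\mathrm{i}t})|\,r^{k+1}\,dt\cdot(\text{factor }\le k+1)\le (k+1)\,r^{k+1}\frac{\Kreiss(A)}{r-1}.
\]
The factor $k+1$ is to come from a Spijker-type bound applied to the degree-$(k+1)$ object. Choosing $r=1+1/k$ gives
\[
r^{k+1}/(r-1)=k(1+1/k)^{k+1}=(k+1)(1+1/k)^k,
\]
which is off by a factor $(k+1)$ if $k+1$ is also incurred. So the correct accounting must produce only one such factor: the bound must be $r^{k+1}\Kreiss(A)/(r-1)$ times a constant. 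The plain Cauchy estimate gives $r^{k+1}\max|R|\le r^{k+1}\Kreiss(A)/(r-1)$ directly, with no Spijker step at all. That yields exactly $(k+1)(1+1/k)^k\Kreiss(A)$.

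So the final plan is the elementary one. Bound $\norm{A^k}\le r^{k+1}\max_{|z|=r}\norm{(zI-A)^{-1}}\le r^{k+1}\Kreiss(A)/(r-1)$, then optimize at $r=1+1/k$. The strict inequality follows from $(1+1/k)^k<e$. No step is really an obstacle; the only care needed is the choice of $r$ and the $k=0$ convention.
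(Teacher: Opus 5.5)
Your final plan is correct and is exactly the paper's argument: the Cauchy integral over $|z|=r$ gives $\norm{A^k}\le r^{k+1}\Kreiss(A)/(r-1)$, and the choice $r=1+1/k$ yields $(k+1)(1+1/k)^k<e(k+1)$. The integration-by-parts, Spijker and Faber detours are unnecessary and should be dropped from a write-up, whereas your separate treatment of $k=0$, where $r=1+1/k$ is undefined and the claim reduces to $\norm{I}=1\le\Kreiss(A)$, covers a case the paper's proof passes over silently.
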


\begin{proof}
There is nothing to prove when $\Kreiss(A)=\infty$, so assume that $\Kreiss(A)<\infty$. For any $r>1$, Cauchy's formula gives
\[
A^k=\frac1{2\pi\mathrm{i}}\int_{|z|=r}z^k(zI-A)^{-1}\,dz.
\]
Taking norms and using the definition of $\Kreiss(A)$, we obtain
\[
\norm{A^k}\le r^{k+1}\max_{|z|=r}\norm{(zI-A)^{-1}}\le \frac{r^{k+1}}{r-1}\Kreiss(A).
\]
Choose $r=1+1/k$. Then
\[
\frac{r^{k+1}}{r-1}=(k+1)\left(1+\frac1{k}\right)^{k}<e(k+1),
\]
which proves the lemma.
\end{proof}

We now prove the main estimate directly. The proof associates a Hankel operator with each cyclic orbit and uses its rank and inertia to control the corresponding scalar moments.

\begin{theorem}\label{thm:main}
Let $J=J^*=J^{-1}$ have inertia $(n-q,q)$. Suppose that $A^*J=JA$ and $\spec(A)\subset\Dbar$. Then, for every $k\ge1$,
\begin{equation}\label{eq:pointwise-main}
\norm{A^k}\le\nu_J(A)\left(1+\frac1k\right)^k\Kreiss(A).
\end{equation}
Consequently,
\begin{equation}\label{eq:uniform-main}
\Power(A)\le e\nu_J(A)\Kreiss(A).
\end{equation}
\end{theorem}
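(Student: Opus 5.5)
The plan is to reduce $\norm{A^k}$ to scalar moments $u^*JA^ku$, to package these moments, damped by a parameter $\rho\in(0,1)$, into a finite-rank selfadjoint Hankel operator, and then to apply Lemma~\ref{lem:hankel-effective}. The reduction is this. Since $A^*J=JA$, we get $(JA^k)^*=(A^*)^kJ=JA^k$, so $JA^k$ is Hermitian. Because $J$ is unitary, $\norm{A^k}=\norm{JA^k}$. For a Hermitian matrix the norm equals the numerical radius, hence
\[
\norm{A^k}=\sup_{\norm{u}=1}|u^*JA^ku|.
\]
It therefore suffices to fix a unit vector $u$ and bound $b_m:=u^*JA^mu$, which is real. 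We may assume $\Kreiss(A)<\infty$.

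\textbf{Step 1: the Hankel operator.} For $0<\rho<1$, define $W_\rho:\ell^2(\mathbb N_0)\to\C^n$ by $W_\rho e_j:=\rho^jA^ju$. Since $\spec(A)\subset\Dbar$, the series $\sum_j\rho^{2j}\norm{A^ju}^2$ converges, so $W_\rho$ is bounded. Set $H_\rho:=W_\rho^*JW_\rho$. Using $A^*J=JA$ repeatedly,
\[
\langle H_\rho e_j,e_i\rangle=\rho^{i+j}(A^iu)^*JA^ju=\rho^{i+j}b_{i+j},
\]
so $H_\rho$ is a selfadjoint Hankel operator with coefficients $\rho^mb_m$. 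Its range lies in $W_\rho^*J(E_u)$, so $\rank H_\rho\le\dim E_u\le d(A)$.

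For the inertia bound, let $L$ be the span of eigenvectors of $H_\rho$ with negative eigenvalues. For nonzero $x\in L$ we have $(W_\rho x)^*J(W_\rho x)=\langle H_\rho x,x\rangle<0$. Hence $W_\rho$ is injective on $L$ and $W_\rho L$ is $J$-negative, which gives $\kappa_-(H_\rho)\le q$. The symmetric argument gives $\kappa_+(H_\rho)\le n-q$. So the minimum appearing in Lemma~\ref{lem:hankel-effective} is at most $\nu_J(A)$.

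\textbf{Step 2: the generating function.} The generating function is
\[
F_\rho(w)=\sum_m\rho^mw^mu^*JA^mu=u^*J(I-\rho wA)^{-1}u.
\]
It is analytic on a neighborhood of $\Dbar$ because $\rho\,\rho(A)<1$. For $|w|=1$, put $z=1/(\rho w)$, so $|z|=1/\rho>1$. Then
\[
|F_\rho(w)|\le\norm{(I-\rho wA)^{-1}}=|z|\norm{(zI-A)^{-1}}\le\frac{1/\rho}{1/\rho-1}\Kreiss(A)=\frac{\Kreiss(A)}{1-\rho}.
\]
Lemma~\ref{lem:hankel-effective} now yields
\[
(k+1)\rho^k|b_k|\le\nu_J(A)\,\frac{\Kreiss(A)}{1-\rho}.
\]

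\textbf{Step 3: optimizing in $\rho$.} Choose $\rho=k/(k+1)$, which maximizes $\rho^k(1-\rho)$. Then $(k+1)\rho^k(1-\rho)=(k/(k+1))^k$, so $|b_k|\le\nu_J(A)(1+1/k)^k\Kreiss(A)$. Taking the supremum over unit $u$ proves \eqref{eq:pointwise-main}.

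For \eqref{eq:uniform-main}, note that $(1+1/k)^k<e$ for $k\ge1$, and for $k=0$ we have $\norm{I}=1\le\Kreiss(A)\le e\nu_J(A)\Kreiss(A)$.

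\textbf{Expected obstacle.} The main obstacle is Step 1. The weights $\rho^j$ are needed both to make $W_\rho$ bounded, since we have no a priori power bound, and to make $F_\rho$ continuous up to the circle. The delicate point is transferring the inertia of $J$ to $H_\rho$ via the injectivity of $W_\rho$ on the negative spectral subspace. The remaining steps are direct substitutions into Lemma~\ref{lem:hankel-effective}.
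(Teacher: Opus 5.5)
Your proposal is correct and follows the paper's proof essentially step for step. The paper uses weights $\rho^{-j}$ with $\rho>1$ and chooses $\rho=1+1/k$, which is exactly your weighting $\rho^{j}$ with $\rho=k/(k+1)$ under the substitution $\rho\mapsto1/\rho$. The rank bound, the inertia transfer via injectivity of $W_\rho$, and the resolvent bound on the boundary values of the generating function are the same arguments; your use of the negative spectral subspace of $H_\rho$ is a minor simplification of the paper's min-max step, not a different route.
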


\begin{proof}
The conclusion is immediate if $\Kreiss(A)=\infty$, so assume that $\Kreiss(A)<\infty$. Fix $k\ge1$, a unit vector $u\in\C^n$, and a number $\rho>1$. For $m\ge0$, define $a_m:=u^*JA^mu$.

We first note that every $a_m$ is real. Indeed, the identity $A^*J=JA$ implies by induction that $(A^*)^mJ=JA^m$. Hence $(JA^m)^*=(A^*)^mJ=JA^m$, so $JA^m$ is Hermitian and therefore $a_m=u^*JA^mu\in\R$.

Next, choose $s$ such that $1<s<\rho$. By the spectral radius formula, $\limsup_{j\to\infty}\norm{A^j}^{1/j}=\rho(A)\le1<s$. Thus there exists a constant $C_s>0$ such that $\norm{A^j}\le C_ss^j$ for all $j\ge0$. Define an operator $W_\rho$ on the standard basis of $\ell^2(\mathbb N_0)$ by $W_\rho e_j:=\rho^{-j}A^ju$. Then
\[
\sum_{j\ge0}\norm{W_\rho e_j}^2=\sum_{j\ge0}\rho^{-2j}\norm{A^ju}^2\le C_s^2\sum_{j\ge0}\left(\frac{s}{\rho}\right)^{2j}<\infty.
\]
For a finitely supported sequence $x=(x_j)_{j\ge0}$, the Cauchy--Schwarz inequality gives
\[
\norm{W_\rho x}\le\sum_{j\ge0}|x_j|\norm{W_\rho e_j}\le\norm{x}\left(\sum_{j\ge0}\norm{W_\rho e_j}^2\right)^{1/2}.
\]
Therefore $W_\rho$ extends uniquely to a bounded operator from $\ell^2(\mathbb N_0)$ to $\C^n$.

Set $H_\rho:=W_\rho^*JW_\rho$. For $i,j\ge0$, we have
\[
\begin{aligned}
\langle H_\rho e_j,e_i\rangle
&=\langle JW_\rho e_j,W_\rho e_i\rangle
 =\frac{(A^iu)^*JA^ju}{\rho^{i+j}}\\
&=\frac{u^*(A^*)^iJA^ju}{\rho^{i+j}}
 =\frac{u^*JA^{i+j}u}{\rho^{i+j}}
 =\frac{a_{i+j}}{\rho^{i+j}}.
\end{aligned}
\]
Thus $H_\rho$ is the finite-rank selfadjoint Hankel operator
\begin{equation}\label{eq:hankel-pullback}
H_\rho=\left[\frac{a_{i+j}}{\rho^{i+j}}\right]_{i,j\ge0}.
\end{equation}

We now record the three structural bounds needed for Lemma~\ref{lem:hankel-effective}. First, the range of $W_\rho$ is contained in the cyclic subspace
\[
E_u:=\operatorname{span}\{u,Au,A^2u,\ldots\}.
\]
Since $\dim E_u\le d(A)$, it follows that
\[
\rank H_\rho\le\rank W_\rho\le d(A).
\]
Second, let $L\subset\ell^2(\mathbb N_0)$ be a subspace on which the quadratic form of $H_\rho$ is negative definite. For every nonzero $x\in L$,
\[
\langle H_\rho x,x\rangle=(W_\rho x)^*J(W_\rho x)<0.
\]
Hence $W_\rho x\ne0$, so $W_\rho$ is injective on $L$, and $W_\rho L$ is a $J$-negative subspace. Since $\operatorname{ind}_-(J)=q$ is the maximal dimension of a $J$-negative subspace, we have $\dim L\le q$. For a finite-rank selfadjoint operator, the number of negative eigenvalues, counted with multiplicity, equals the maximal dimension of a subspace on which its quadratic form is negative definite. Therefore $\kappa_-(H_\rho)\le q$. The same argument applied to positive-definite subspaces gives $\kappa_+(H_\rho)\le n-q$.

Define the analytic generating function of $H_\rho$ by
\[
F_\rho(w):=\sum_{m\ge0}\frac{a_m}{\rho^m}w^m.
\]
Because $|a_m|\le\norm{A^m}\le C_ss^m$ and $s<\rho$, this series converges absolutely and uniformly on $\Dbar$. Applying Lemma~\ref{lem:hankel-effective} to \eqref{eq:hankel-pullback} and using the three bounds above, we obtain
\begin{equation}\label{eq:hankel-moment-bound}
(k+1)\frac{|a_k|}{\rho^k}\le\nu_J(A)\max_{|w|=1}|F_\rho(w)|.
\end{equation}

It remains to estimate the boundary values of $F_\rho$ by the resolvent. Fix $w$ with $|w|=1$ and set $z:=\rho/w$, so $|z|=\rho>1$. Since $\norm{A^m}\le C_ss^m$ and $s<\rho=|z|$, the series $\sum_{m\ge0}A^m/z^{m+1}$ converges absolutely in operator norm. Hence the resolvent expansion is
\[
(zI-A)^{-1}=\sum_{m\ge0}\frac{A^m}{z^{m+1}}.
\]
Using $z=\rho/w$, we compute
\[
\begin{aligned}
\frac{\rho}{w}u^*J(zI-A)^{-1}u
&=\frac{\rho}{w}\sum_{m\ge0}\frac{u^*JA^mu}{z^{m+1}}
 =\frac{\rho}{w}\sum_{m\ge0}a_m\left(\frac{w}{\rho}\right)^{m+1}\\
&=\sum_{m\ge0}\frac{a_m}{\rho^m}w^m
 =F_\rho(w).
\end{aligned}
\]
Since $J$ is unitary and $\norm{u}=1$, this identity gives
\[
|F_\rho(w)|\le\rho\norm{(zI-A)^{-1}}.
\]
Taking the maximum over $|w|=1$, or equivalently over $|z|=\rho$, yields
\begin{equation}\label{eq:generating-resolvent-bound}
\max_{|w|=1}|F_\rho(w)|\le\rho\max_{|z|=\rho}\norm{(zI-A)^{-1}}\le\frac{\rho}{\rho-1}\Kreiss(A).
\end{equation}
Combining \eqref{eq:hankel-moment-bound} and \eqref{eq:generating-resolvent-bound}, we find
\[
|u^*JA^ku|=|a_k|\le\nu_J(A)\frac{\rho^{k+1}}{(k+1)(\rho-1)}\Kreiss(A).
\]
Choose $\rho=1+1/k$. Then $(k+1)(\rho-1)=(k+1)/k$, and hence
\[
\frac{\rho^{k+1}}{(k+1)(\rho-1)}=\frac{k}{k+1}\left(1+\frac1k\right)^{k+1}=\left(1+\frac1k\right)^k.
\]
Therefore
\[
|u^*JA^ku|\le\nu_J(A)\left(1+\frac1k\right)^k\Kreiss(A).
\]
Finally, $JA^k$ is Hermitian and multiplication by the unitary matrix $J$ does not change the spectral norm. Thus
\[
\norm{A^k}=\norm{JA^k}=\sup_{\norm{u}=1}|u^*JA^ku|,
\]
and \eqref{eq:pointwise-main} follows.

For the uniform estimate, $\norm{A^0}=1\le\Kreiss(A)$. For every $k\ge1$, the inequality $(1+1/k)^k\le e$ and \eqref{eq:pointwise-main} give $\norm{A^k}\le e\nu_J(A)\Kreiss(A)$. Taking the supremum over $k\ge0$ proves \eqref{eq:uniform-main}.
\end{proof}

\begin{remark}\label{rem:local-cyclic-dimension}
For a nonzero vector $u$, let
\[
d_u(A):=\dim\operatorname{span}\{u,Au,A^2u,\ldots\}.
\]
The proof above uses only the bound $\rank H_\rho\le d_u(A)$. Hence, for every unit vector $u$ and every $k\ge1$,
\[
|u^*JA^ku|\le\min\{d_u(A),2q+1,2(n-q)+1\}\left(1+\frac1k\right)^k\Kreiss(A).
\]
This is a quadratic-form estimate along the direction $u$; the global theorem follows by replacing $d_u(A)$ with $d(A)$ and taking the supremum over unit vectors.
\end{remark}

\begin{corollary}\label{cor:time-inertia}
Under the assumptions of Theorem~\ref{thm:main}, for every $k\ge1$,
\[
\norm{A^k}\le \left(1+\frac1{k}\right)^{k} \min\left\{(k+1),\,\nu_J(A)\right\}\Kreiss(A).
\]
Consequently, for every $k\ge0$,
\[
\norm{A^k}\le e\min\{k+1,\nu_J(A)\}\Kreiss(A).
\]
\end{corollary}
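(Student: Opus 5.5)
The corollary follows by combining the two pointwise estimates already proved, Lemma~\ref{lem:time-index} and Theorem~\ref{thm:main}, for the same $k$.

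\emph{Case $k\ge1$.} Lemma~\ref{lem:time-index} gives
\[
\norm{A^k}\le (k+1)\left(1+\tfrac1k\right)^k\Kreiss(A).
\]
Its only hypothesis is $\spec(A)\subset\Dbar$, which is assumed. Theorem~\ref{thm:main} gives
\[
\norm{A^k}\le \nu_J(A)\left(1+\tfrac1k\right)^k\Kreiss(A).
\]
Both bounds carry the common factor $(1+1/k)^k\Kreiss(A)$, which is nonnegative. So $\norm{A^k}$ is bounded by the smaller of the two, and that is $(1+1/k)^k\min\{k+1,\nu_J(A)\}\Kreiss(A)$. If $\Kreiss(A)=\infty$, both sides are infinite and the inequality holds in the extended sense.

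\emph{Uniform form, $k\ge1$.} Use $(1+1/k)^k\le e$, which follows because $(1+1/k)^k$ increases to $e$, or from $\log(1+x)\le x$. This gives $\norm{A^k}\le e\min\{k+1,\nu_J(A)\}\Kreiss(A)$.

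\emph{Case $k=0$.} Here $\norm{A^0}=1$, and $\Kreiss(A)\ge1$ as noted in Section~\ref{sec:preliminaries}. Also $\min\{1,\nu_J(A)\}\ge1$, because $d(A)\ge1$ for $n\ge1$, and $2q+1\ge1$ and $2(n-q)+1\ge1$. Since $e>1$, we get $1\le e\min\{1,\nu_J(A)\}\Kreiss(A)$.

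There is no real obstacle. The only points needing care are the separate treatment of $k=0$, where the factor $(1+1/k)^k$ is undefined, and confirming that $\nu_J(A)\ge1$ so the minimum at $k=0$ equals $1$.
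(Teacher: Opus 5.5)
Your proposal is correct and matches the paper's proof: for $k\ge1$ take the smaller of the bounds from Lemma~\ref{lem:time-index} and Theorem~\ref{thm:main}, then use $(1+1/k)^k\le e$, and treat $k=0$ separately via $\norm{A^0}=1\le\Kreiss(A)$. Your explicit check that $\nu_J(A)\ge1$ is a small detail the paper leaves implicit, and one phrase needs fixing: when $\Kreiss(A)=\infty$, only the right-hand side is infinite, since $\norm{A^k}$ is always finite.
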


\begin{proof}
For $k\ge1$, take the smaller of the bounds in Lemma~\ref{lem:time-index} and Theorem~\ref{thm:main}. The simplified estimate follows from  $(1+1/k)^k\le e$. For $k=0$, we have $\norm{A^0}=1\le\Kreiss(A)$.
\end{proof}

\begin{remark}
\label{rem:comparison}
Since \(d(A)\le n\), one has
$\nu_J(A) = \min \{d(A),2q+1,2(n-q)+1\} \le n.
$ Therefore, the uniform estimate $\Power(A)\le e\nu_J(A)\Kreiss(A)$
is never weaker than the classical finite-dimensional bound $\Power(A)\le en\Kreiss(A)$.
The effective-dimension factor is strictly smaller than \(n\) whenever
$d(A)<n$ or $2\min\{q,n-q\}+1<n$. The theorem also immediately gives the two one-sided estimates
\[
\Power(A)\le e(2q+1)\Kreiss(A),
\qquad
\Power(A)\le e\bigl(2(n-q)+1\bigr)\Kreiss(A).
\]
In particular, if
$J=\operatorname{diag}(I_p,-I_q),$ and $p+q=n$ then
\[
\Power(A)
\le
e\min\{d(A),2p+1,2q+1\}\Kreiss(A).
\]
Thus, when one of $p$ and $q$ is much smaller than the other, the dimension factor is controlled by the smaller signature block rather than by the ambient dimension.

At the definite endpoints \(q=0\) and \(q=n\), one has \(J=\pm I\), and the
relation \(A^*J=JA\) reduces to \(A=A^*\). Since
\(\spec(A)\subset\Dbar\), selfadjointness implies
$\spec(A)\subset[-1,1]$. Consequently, $\Power(A)=\sup_{k\ge0}\norm{A^k}=1$.
Moreover, for every \(|z|>1\),
\[
\norm{(zI-A)^{-1}}
=
\frac{1}{\operatorname{dist}(z,\spec(A))}
\le
\frac{1}{|z|-1},
\]
because \(\spec(A)\subset\Dbar\). Hence
$\Kreiss(A)\le 1$.
Together with the general lower bound \(\Kreiss(A)\ge1\), this gives
\[
\Power(A)=\Kreiss(A)=1.
\]
Thus the factor \(e\) is not sharp at the definite endpoints.
\end{remark}

\begin{remark}
\label{rem:spectrum}
Theorem~\ref{thm:main} does not require the spectrum of \(A\) to be real,
nor does it require \(A\) to be diagonalizable. The scalar moments $u^*JA^mu$ are real because \(JA^m\) is Hermitian, not because the eigenvalues of
\(A\) are real. If \(A\) has a nontrivial Jordan block associated with an eigenvalue on the unit circle, then its powers are unbounded and $\Kreiss(A)=\infty$.
In this case, the estimates in Theorem~\ref{thm:main} are understood in the
extended sense.
\end{remark}

\section{Sharpness of the inertia factor}
\label{sec:sharpness}

We now show that the linear dependence on the smaller inertia index in
Theorem~\ref{thm:main} cannot, in general, be improved. The construction
is based on the scaled nilpotent shift used in the classical sharpness
examples for the Kreiss matrix theorem
\citep{levequeTrefethen1984,spijkerTracognaWelfert2003}. The point specific
to the present setting is that this shift is selfadjoint with respect to
a fundamental symmetry of inertia $(q+1,q)$ in dimension $2q+1$.

\begin{theorem}
\label{thm:sharpness}
For every \(q\ge1\), set \(n:=2q+1\). There exist a fundamental symmetry
\(J_q\in\C^{n\times n}\) with negative index \(q\) and a
\(J_q\)-selfadjoint matrix \(A_q\in\C^{n\times n}\) such that
\[
\spec(A_q)=\{0\}, \qquad
d(A_q)=n, \qquad \text{and} \qquad \frac{\Power(A_q)}{en\Kreiss(A_q)}
\xrightarrow[q\to\infty]{}1.
\]

For every \(N\ge n\), this pair can be extended to dimension \(N\)
without changing either \(\Power\) or \(\Kreiss\). Hence, for every
sequence \(N(q)\ge2q+1\),
\[
\frac{\Power(A_{q,N(q)})}
{e(2q+1)\Kreiss(A_{q,N(q)})}
\xrightarrow[q\to\infty]{}1.
\]
\end{theorem}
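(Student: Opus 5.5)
Our plan is to take the classical sharpness example for the Kreiss matrix theorem and show that it already carries the required indefinite structure. Let $S\in\C^{n\times n}$ be the nilpotent shift $Se_j=e_{j+1}$ ($0\le j<n-1$), $Se_{n-1}=0$. For $0<\varepsilon$ small we set $A_q:=\beta S$ with a scaling $\beta=\beta(n)$ chosen as in \citet{levequeTrefethen1984,spijkerTracognaWelfert2003}, or more generally a weighted shift $A_q e_j=\beta_j e_{j+1}$ with positive weights $\beta_j$. These references give a sequence of such nilpotent shifts with $\Power(A)/(en\Kreiss(A))\to1$ as $n\to\infty$. Since $A_q$ is nilpotent of index $n$, we get $\spec(A_q)=\{0\}$ and $d(A_q)=n$ directly. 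The only new ingredient is a fundamental symmetry $J_q$ with negative index $q$ satisfying $A_q^*J_q=J_qA_q$.

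For $J_q$ we take the anti-diagonal flip $J_qe_j=e_{n-1-j}$. It is real, symmetric and involutive. Because $n=2q+1$ is odd, it has eigenvalue $+1$ with multiplicity $q+1$ (on the vectors $e_j+e_{n-1-j}$ and $e_q$) and eigenvalue $-1$ with multiplicity $q$ (on the vectors $e_j-e_{n-1-j}$, $j<q$). Thus its negative index is exactly $q$. Next we check that $J_qA_q$ is Hermitian. For the weighted shift, $J_qA_q e_j=\beta_j e_{n-2-j}$, so $J_qA_q$ has entries $\beta_j$ at positions $(n-2-j,j)$. This matrix is symmetric exactly when $\beta_j=\beta_{n-2-j}$. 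The constant scaling $\beta S$ satisfies this automatically. If the sharp extremal example requires nonconstant weights, we plan to symmetrize them, or argue that the extremal weights may be taken palindromic. Because $J_qA_q$ is real symmetric, $A_q^*J_q=J_qA_q$ follows.

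The main obstacle is the asymptotic $\Power/(en\Kreiss)\to1$. Our first attempt is to quote the known result that scaled shifts $A=\beta S$ attain $\Power(A)/\Kreiss(A)\sim en$. Concretely, one shows $\Kreiss(\beta S)\le$ an explicit bound via $\norm{(zI-\beta S)^{-1}}\le\sum_{m<n}\beta^m|z|^{-m-1}$, optimized in $|z|$, while $\Power(\beta S)=\max_k\beta^k$. We then choose $\beta$ depending on $n$ so that the ratio tends to $en$. If the simple constant scaling only attains ratio $\sim cn$ with $c<e$, we fall back on the refined Spijker--Tracogna--Welfert construction. In that case the palindromic symmetry of the weights is the point that needs checking.

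For the padding statement, given $N\ge n$ we set $J_{q,N}:=J_q\oplus I_{N-n}$, which still has negative index $q$, and $A_{q,N}:=A_q\oplus 0_{N-n}$. Then $A_{q,N}$ is $J_{q,N}$-selfadjoint. Powers and resolvents act blockwise, so $\norm{A_{q,N}^k}=\max\{\norm{A_q^k},\norm{0^k}\}$, which equals $\norm{A_q^k}$ for $k\ge1$ and $1$ for $k=0$. Similarly, $(|z|-1)\norm{(zI-A_{q,N})^{-1}}=\max\{(|z|-1)\norm{(zI-A_q)^{-1}},(|z|-1)/|z|\}$, and the second term is at most $1\le\Kreiss(A_q)$. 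Hence $\Power$ and $\Kreiss$ are unchanged, using $\Power(A_q)\ge1$ and $\Kreiss(A_q)\ge1$. The ratio statement for any $N(q)\ge2q+1$ then follows from the first part, since $e(2q+1)=en$.
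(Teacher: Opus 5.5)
Your construction is the paper's: the reversal matrix $J_q$, the constant-weight scaled shift $\beta S$, and padding by $J_q\oplus I_{N-n}$ and $A_q\oplus 0_{N-n}$. Your checks of the inertia $(q+1,q)$, of $J_q$-selfadjointness, of $\spec(A_q)=\{0\}$ and $d(A_q)=n$, and of the invariance of $\Power$ and $\Kreiss$ under padding are all correct. The gap is the asymptotic $\Power(A_q)/(en\Kreiss(A_q))\to1$, which is the actual content of the theorem. You never fix $\beta$ and do not carry out the optimization over $|z|$. You also leave open whether constant scaling gives the constant $e$ or only some $c<e$, and your fallback to nonconstant weights carries its own unchecked point (palindromic weights). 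As written, the quantitative claim rests on a citation whose content you are not sure of.

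The missing computation is short, and it shows no fallback is needed. Take $\beta=\beta(n)\to\infty$; the paper uses $\beta=n^3$. Then $\Power(\beta S)=\beta^{n-1}$, and for $r=|z|>1$,
\[
(r-1)\norm{(zI-\beta S)^{-1}}\le\beta^{n-1}\,\frac{r-1}{r^{n}}\sum_{\ell=0}^{n-1}\Bigl(\frac{r}{\beta}\Bigr)^{\ell}.
\]
The constant $e$ comes from $c_n:=\max_{r>1}(r-1)r^{-n}=(n-1)^{n-1}/n^n$, attained at $r_n=n/(n-1)$, since $1/c_n=n\bigl(1+\tfrac1{n-1}\bigr)^{n-1}\sim en$.

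On $1<r\le2$ the geometric factor is at most $(1-2/\beta)^{-1}$, and this is $1+o(1)$ only because $\beta\to\infty$. With $\beta>1$ fixed, the bound certifies a ratio of only about $(1-1/\beta)\,en$, which is exactly the source of your worry. For $r>2$ you must split further; the paper uses $2<r\le\beta/2$, $\beta/2<r\le\beta$ and $r\ge\beta$. This is needed because near and beyond $r=\beta$ the top term no longer dominates the sum. The resulting bounds $2^{2-n}$, $n(2/\beta)^{n-1}$ and $n\beta^{1-n}$ are all $o(c_n)$, because $c_n\ge1/(en)$.

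This gives $\Kreiss(A_q)\le(1+o(1))\beta^{n-1}c_n$, which yields only $\liminf_{q\to\infty}\Power(A_q)/(en\Kreiss(A_q))\ge1$. For the limit you also need the reverse inequality. It follows from the classical bound $\Power(A)\le en\Kreiss(A)$, or from Theorem~\ref{thm:main} since $\nu_{J_q}(A_q)=n$. Alternatively, as in the paper, evaluate the resolvent at $z=r_n$ and apply the reverse triangle inequality against the top term $\beta^{n-1}r_n^{-n}S^{n-1}$ to get $\Kreiss(A_q)\ge(1-o(1))\beta^{n-1}c_n$.
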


\begin{proof}
Fix \(q\ge1\), set \(n:=2q+1\), and let \(S_n\) be the nilpotent Jordan
shift
\[
S_n=
\begin{pmatrix}
0&1&0&\cdots&0\\
0&0&1&\ddots&\vdots\\
\vdots&\ddots&\ddots&\ddots&0\\
0&\cdots&0&0&1\\
0&\cdots&\cdots&0&0
\end{pmatrix}.
\]
Let \(J_q\) be the reversal matrix
\[
J_q=
\begin{pmatrix}
0&\cdots&0&1\\
0&\cdots&1&0\\
\vdots&\iddots&\iddots&\vdots\\
1&0&\cdots&0
\end{pmatrix},
\]
and define \(\alpha_n:=n^3\) and \(A_q:=\alpha_nS_n\). The particular choice \(\alpha_n=n^3\) is made only to ensure that the lower-order terms in the resolvent expansion are asymptotically negligible compared with its highest-order term.

The matrix \(J_q\) is real symmetric and satisfies \(J_q^2=I\), so
\(J_q=J_q^*=J_q^{-1}\). For \(1\le j\le q\), the vectors
\[
\frac{e_j+e_{n+1-j}}{\sqrt2}
\quad\text{and}\quad
\frac{e_j-e_{n+1-j}}{\sqrt2}
\]
are eigenvectors of \(J_q\) with eigenvalues \(1\) and \(-1\),
respectively. The remaining vector \(e_{q+1}\) is an eigenvector with
eigenvalue \(1\). Hence \(J_q\) has inertia \((q+1,q)\), and therefore
\(\operatorname{ind}_-(J_q)=q\).

We next verify \(J_q\)-selfadjointness. Since
\(J_qe_j=e_{n+1-j}\), one has
\[
J_qS_nJ_qe_j=
\begin{cases}
e_{j+1}, & 1\le j\le n-1,\\
0, & j=n.
\end{cases}
\]
Thus \(J_qS_nJ_q=S_n^*\), or equivalently \(S_n^*J_q=J_qS_n\). It follows
that
\[
A_q^*J_q
=
\alpha_nS_n^*J_q
=
\alpha_nJ_qS_n
=
J_qA_q.
\]

Since \(S_n\) is a single nilpotent Jordan block of size \(n\), one has
\(S_n^n=0\) and \(S_n^{n-1}\ne0\). Consequently,
\[
\spec(A_q)=\{0\},
\qquad
m_{A_q}(x)=x^n,
\qquad
d(A_q)=n.
\]
Because the inertia of \(J_q\) is \((q+1,q)\),
\[
\nu_{J_q}(A_q)
=
\min\{n,2q+1,2(q+1)+1\}
=
n.
\]

For \(0\le k\le n-1\), the matrix \(S_n^k\) maps $\operatorname{span}\{e_{k+1},\ldots,e_n\}$ isometrically onto $\operatorname{span}\{e_1,\ldots,e_{n-k}\}$ and vanishes on its orthogonal complement. Hence \(\norm{S_n^k}=1\). Moreover, \(A_q^k=0\) for \(k\ge n\). Since
\(\alpha_n>1\), it follows that
\begin{equation}\label{eq:sharpness-power}
\Power(A_q)=\alpha_n^{n-1}.
\end{equation}

It remains to estimate the Kreiss constant. For \(z\ne0\), nilpotency
gives the finite resolvent expansion
\[
(zI-A_q)^{-1}
=
\sum_{k=0}^{n-1}\frac{\alpha_n^k}{z^{k+1}}S_n^k.
\]
Let \(r:=|z|>1\). Since \(\norm{S_n^k}=1\), the triangle inequality gives
\[
(r-1)\norm{(zI-A_q)^{-1}}
\le
\alpha_n^{n-1}F_n(r),
\]
where
\[
F_n(r):=
\frac{r-1}{r^n}
\sum_{\ell=0}^{n-1}\left(\frac{r}{\alpha_n}\right)^\ell.
\]

Set \(f_n(r):=(r-1)/r^n\). Its derivative is
\[
f_n'(r)=\frac{n-(n-1)r}{r^{n+1}},
\]
so \(f_n\) attains its maximum over \(r>1\) at
\[
r_n:=\frac{n}{n-1}.
\]
The maximal value is
\[
c_n:=f_n(r_n)=\frac{(n-1)^{n-1}}{n^n}.
\]

We now estimate \(F_n(r)\) in four ranges. If \(1<r\le2\), then
\(f_n(r)\le c_n\) and
\[
\sum_{\ell=0}^{n-1}
\left(\frac{r}{\alpha_n}\right)^\ell
\le
\frac{1}{1-2/\alpha_n},
\]
so \(F_n(r)\le c_n/(1-2/\alpha_n)\).

If \(2<r\le\alpha_n/2\), then
\[
\sum_{\ell=0}^{n-1}
\left(\frac{r}{\alpha_n}\right)^\ell
\le2
\]
and \(f_n(r)\le r^{1-n}\le2^{1-n}\). Hence
\(F_n(r)\le2^{2-n}\).

If \(\alpha_n/2<r\le\alpha_n\), then each term in the sum defining
\(F_n(r)\) is at most \(1\). Therefore,
\[
F_n(r)\le n r^{1-n}
\le n\left(\frac{2}{\alpha_n}\right)^{n-1}.
\]

Finally, if \(r\ge\alpha_n\), then
\[
\sum_{\ell=0}^{n-1}
\left(\frac{r}{\alpha_n}\right)^\ell
\le
n\left(\frac{r}{\alpha_n}\right)^{n-1},
\]
and hence \(F_n(r)\le n/\alpha_n^{n-1}\).

Since
\[
c_n
=
\frac1n\left(1-\frac1n\right)^{n-1}
\ge
\frac{1}{en}
\]
and \(\alpha_n=n^3\), the estimates in the last three ranges satisfy
\[
\frac{2^{2-n}}{c_n}\le en2^{2-n}\to0, \qquad \frac{n(2/\alpha_n)^{n-1}}{c_n} \le
en^2\left(\frac{2}{n^3}\right)^{n-1}
\to0,
\]
and
\[
\frac{n/\alpha_n^{n-1}}{c_n}
\le
\frac{en^2}{n^{3(n-1)}}
\to0.
\]
Moreover, \((1-2/\alpha_n)^{-1}=1+o(1)\). Therefore,
\[
\sup_{r>1}F_n(r)\le(1+o(1))c_n,
\]
where \(o(1)\) is understood as \(n\to\infty\). It follows that
\begin{equation}\label{eq:sharpness-kreiss-upper}
\Kreiss(A_q)
\le
(1+o(1))\alpha_n^{n-1}c_n.
\end{equation}

For the reverse estimate, evaluate the resolvent at the positive real
number \(z=r_n\). Write
\[
(r_nI-A_q)^{-1}
=
\alpha_n^{n-1}r_n^{-n}S_n^{n-1}+E_n,
\]
where
\[
E_n:=
\sum_{k=0}^{n-2}\alpha_n^kr_n^{-k-1}S_n^k.
\]
Since \(\norm{S_n^{n-1}}=1\), the reverse triangle inequality gives
\[
\norm{(r_nI-A_q)^{-1}}
\ge
\alpha_n^{n-1}r_n^{-n}-\norm{E_n}.
\]
Furthermore,
\[
\frac{\norm{E_n}}
{\alpha_n^{n-1}r_n^{-n}}
\le
\sum_{\ell=1}^{n-1}
\left(\frac{r_n}{\alpha_n}\right)^\ell
\le
\frac{r_n/\alpha_n}{1-r_n/\alpha_n}
=o(1).
\]
Therefore,
\[
\Kreiss(A_q)
\ge
(r_n-1)\norm{(r_nI-A_q)^{-1}}
\ge
(1-o(1))\alpha_n^{n-1}c_n.
\]
Together with \eqref{eq:sharpness-kreiss-upper}, this yields
\begin{equation}\label{eq:sharpness-kreiss-asymptotic}
\Kreiss(A_q)
=
(1+o(1))\alpha_n^{n-1}c_n.
\end{equation}

Combining \eqref{eq:sharpness-power} and
\eqref{eq:sharpness-kreiss-asymptotic}, we obtain
\[
\frac{\Power(A_q)}{\Kreiss(A_q)}
=
(1+o(1))\frac{1}{c_n}
=
(1+o(1))
n\left(1+\frac{1}{n-1}\right)^{n-1}
\sim en.
\]
Since \(n=2q+1\), this proves
\[
\frac{\Power(A_q)}
{e(2q+1)\Kreiss(A_q)}
\longrightarrow1.
\]

If \(N=n\), take \(A_{q,N}:=A_q\) and \(J_{q,N}:=J_q\); the required properties have already been proved. Assume now that \(N>n\), set \(m:=N-n\), and define
\[
A_{q,N}:=A_q\oplus0_m,
\qquad
J_{q,N}:=J_q\oplus I_m.
\]
Then \(J_{q,N}\) has inertia \((N-q,q)\), and
\(A_{q,N}\) is \(J_{q,N}\)-selfadjoint. Moreover,
\(\spec(A_{q,N})=\{0\}\).

For every \(k\ge0\),
\[
\norm{A_{q,N}^k}
=
\max\{\norm{A_q^k},\norm{0_m^k}\},
\]
where \(0_m^0=I_m\). Since \(\Power(A_q)>1\), it follows that
\[
\Power(A_{q,N})=\Power(A_q).
\]
For \(|z|>1\),
\[
(zI-A_{q,N})^{-1}
=
(zI-A_q)^{-1}\oplus z^{-1}I_m.
\]
Thus
\[
\Kreiss(A_{q,N})
=
\max\{\Kreiss(A_q),\Kreiss(0_m)\}
=
\Kreiss(A_q),
\]
because \(\Kreiss(0_m)=1\) and \(\Kreiss(A_q)\ge1\). The asserted limit
therefore holds for every sequence \(N(q)\ge2q+1\).
\end{proof}

Since \(d(A_q)=2q+1\) and \(J_q\) has inertia \((q+1,q)\), one has
\[
\nu_{J_q}(A_q)
=
\min\{2q+1,2q+1,2q+3\}
=
2q+1.
\]
Thus the core construction is asymptotically sharp for the
effective-dimension estimate.

\begin{corollary}
\label{cor:no-sublinear-inertia}
There do not exist a constant \(C>0\) and a function
\(g:\mathbb N\to(0,\infty)\) satisfying \(g(r)=o(r)\) as \(r\to\infty\)
such that
\[
\Power(A)
\le
C\,g\!\left(
\min\{\operatorname{ind}_-(J),\operatorname{ind}_+(J)\}
\right)\Kreiss(A)
\]
for every fundamental symmetry \(J\) and every \(J\)-selfadjoint matrix
\(A\) with \(\spec(A)\subset\Dbar\).
\end{corollary}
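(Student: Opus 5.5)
The argument is by contradiction, using the family from Theorem~\ref{thm:sharpness}. Suppose such \(C>0\) and \(g\) with \(g(r)=o(r)\) exist. For each \(q\ge1\), take the pair \((J_q,A_q)\) of Theorem~\ref{thm:sharpness} in dimension \(n=2q+1\). Here \(J_q\) has inertia \((q+1,q)\), so
\[
\min\{\operatorname{ind}_-(J_q),\operatorname{ind}_+(J_q)\}=q.
\]
Moreover, \(A_q\) is \(J_q\)-selfadjoint with \(\spec(A_q)=\{0\}\subset\Dbar\). Since \(A_q\) is nilpotent, its resolvent is a polynomial in \(1/z\), so \(\Kreiss(A_q)\) is finite; it is also at least \(1\). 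Hence the ratio \(\Power(A_q)/\Kreiss(A_q)\) is a well-defined positive real number, and the hypothesized inequality gives
\[
\frac{\Power(A_q)}{\Kreiss(A_q)}\le C\,g(q)\qquad(q\ge1).
\]

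On the other hand, Theorem~\ref{thm:sharpness} gives
\[
\frac{\Power(A_q)}{\Kreiss(A_q)}=(1+o(1))\,e(2q+1).
\]
In particular, this ratio is at least \(eq\) for all sufficiently large \(q\). Combining the two bounds yields \(eq\le C g(q)\), that is, \(g(q)/q\ge e/C>0\) for all large \(q\). This contradicts \(g(q)/q\to0\), and so no such \(C\) and \(g\) can exist.

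There is essentially no obstacle here; the only points to verify are these:
\begin{itemize}
\item the minimum of the two inertia indices equals \(q\) exactly, and is not \(q+1\);
\item the Kreiss constant is finite, so the division is legitimate.
\end{itemize}
Both were recorded in the proof of Theorem~\ref{thm:sharpness}. If desired, the padded family \(A_{q,N(q)}\) gives the same contradiction with \(N(q)\) arbitrary. This shows that even allowing the ambient dimension to grow does not help.
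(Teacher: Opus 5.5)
Your proposal is correct and follows essentially the same route as the paper's proof: argue by contradiction, apply the hypothesized bound to the core pair $(J_q,A_q)$ of Theorem~\ref{thm:sharpness}, note that its smaller inertia index is exactly $q$, and observe that $\Power(A_q)/\Kreiss(A_q)\sim e(2q+1)$ cannot be bounded by $Cg(q)=o(q)$. Your additional checks (that $\Kreiss(A_q)$ is finite, and the explicit lower bound $eq$ for large $q$) only spell out details the paper leaves implicit.
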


\begin{proof}
Suppose that such \(C\) and \(g\) exist. Apply the asserted estimate to
the core matrices \(J_q,A_q\) from Theorem~\ref{thm:sharpness}. Since the
inertia of \(J_q\) is \((q+1,q)\), one obtains
\[
\frac{\Power(A_q)}{\Kreiss(A_q)}
\le
Cg(q)
=
o(q).
\]
On the other hand, Theorem~\ref{thm:sharpness} gives
\[
\frac{\Power(A_q)}{\Kreiss(A_q)}
\sim
e(2q+1),
\]
which is of linear order in \(q\). This contradiction proves the result.
\end{proof}

The core construction satisfies \(d(A_q)=2q+1\), so it does not by itself
separate the inertia term from the minimal-polynomial term in
\(\nu_{J_q}(A_q)\). The next result shows that the same sharpness ratio
persists even when the minimal-polynomial degree equals the full ambient
dimension.

\begin{corollary}
\label{cor:sharpness-full-degree}
For every \(q\ge1\) and \(N\ge2q+1\), there exist a fundamental symmetry
\(\widehat J_{q,N}\in\C^{N\times N}\) with negative index \(q\) and a
\(\widehat J_{q,N}\)-selfadjoint matrix
\(\widehat A_{q,N}\in\C^{N\times N}\) such that
\[
\spec(\widehat A_{q,N})\subset\D,
\qquad
d(\widehat A_{q,N})=N,
\qquad
\nu_{\widehat J_{q,N}}(\widehat A_{q,N})=2q+1.
\]
Moreover, for every sequence \(N(q)\ge2q+1\),
\[
\frac{\Power(\widehat A_{q,N(q)})}
{e(2q+1)\Kreiss(\widehat A_{q,N(q)})}
\xrightarrow[q\to\infty]{}1.
\]
In particular, \(N(q)\) may be chosen so that \(q/N(q)\to0\).
\end{corollary}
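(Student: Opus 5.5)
We follow the hint given after Theorem~\ref{thm:sharpness} and replace the zero padding block by a diagonal Hermitian contraction with distinct nonzero eigenvalues. Fix $q\ge1$, $n:=2q+1$, and $m:=N-n$. When $m=0$ we take the core pair $(J_q,A_q)$ itself, since $d(A_q)=n=N$ and $\spec(A_q)=\{0\}\subset\D$. When $m\ge1$ we choose distinct real numbers $\lambda_1,\ldots,\lambda_m\in(0,1/2]$ and set $D_m:=\operatorname{diag}(\lambda_1,\ldots,\lambda_m)$. We then define
\[
\widehat A_{q,N}:=A_q\oplus D_m,
\qquad
\widehat J_{q,N}:=J_q\oplus I_m.
\]

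\textbf{Structural properties.} $D_m$ is Hermitian, so $D_m^*I_m=I_mD_m$; together with $A_q^*J_q=J_qA_q$ this gives $\widehat J_{q,N}$-selfadjointness. The inertia of $\widehat J_{q,N}$ is $(N-q,q)$. The spectrum is $\{0\}\cup\{\lambda_i\}\subset\D$. The minimal polynomial is the least common multiple of $x^n$ and $\prod_i(x-\lambda_i)$, because the $\lambda_i$ are nonzero and distinct. These two factors are coprime, so $d(\widehat A_{q,N})=n+m=N$. Hence $\nu=\min\{N,2q+1,2(N-q)+1\}=2q+1$, since $N\ge 2q+1$ and $2(N-q)+1\ge 2q+3$.

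\textbf{Power and Kreiss constants.} By the block structure, $\norm{\widehat A_{q,N}^k}=\max\{\norm{A_q^k},\norm{D_m^k}\}$. Since $\norm{D_m^k}\le1<\Power(A_q)$, we get $\Power(\widehat A_{q,N})=\Power(A_q)$. Similarly,
\[
\norm{(zI-\widehat A_{q,N})^{-1}}=\max\bigl\{\norm{(zI-A_q)^{-1}},\norm{(zI-D_m)^{-1}}\bigr\}.
\]
$D_m$ is normal with spectrum in $[0,1/2]$, so $\norm{(zI-D_m)^{-1}}=1/\operatorname{dist}(z,\spec D_m)\le 1/(|z|-1)$. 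This gives $\Kreiss(D_m)\le1\le\Kreiss(A_q)$, and therefore $\Kreiss(\widehat A_{q,N})=\Kreiss(A_q)$.

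\textbf{Conclusion and main obstacle.} The ratio $\Power/(e(2q+1)\Kreiss)$ is therefore identical to that of the core pair. By Theorem~\ref{thm:sharpness} it tends to $1$ for any sequence $N(q)\ge2q+1$. Taking, for example, $N(q)=q^2+2q+1$ gives $q/N(q)\to0$.

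The only point requiring care is that the added block changes neither $\Power$ nor $\Kreiss$. This hinges on $D_m$ being a normal contraction, whose resolvent is therefore controlled by its distance to the spectrum. It also hinges on the core values satisfying $\Power(A_q)>1$ and $\Kreiss(A_q)\ge1$, which ensures they dominate in the block maxima. The full-degree claim only needs the $\lambda_i$ to be distinct and nonzero, so that the factor $x^n$ is not merged with the diagonal part.
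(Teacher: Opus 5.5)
Your proposal is correct and uses essentially the same construction as the paper. You form the direct sum $A_q\oplus D_m$ with $J_q\oplus I_m$, use coprimality of $x^{2q+1}$ and $\prod_i(x-\lambda_i)$ to get $d=N$, and note that the normal contraction block $D_m$ satisfies $\Power(D_m)\le 1$ and $\Kreiss(D_m)\le 1$, so it changes neither constant. Restricting to $\lambda_i\in(0,1/2]$ and choosing $N(q)=(q+1)^2$ explicitly are only cosmetic differences.
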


\begin{proof}
Let \(A_q,J_q\in\C^{(2q+1)\times(2q+1)}\) be the core matrices from
Theorem~\ref{thm:sharpness}, and set \(m:=N-(2q+1)\). If \(m=0\), take
\(\widehat A_{q,N}:=A_q\) and \(\widehat J_{q,N}:=J_q\).

Assume \(m\ge1\). Choose distinct numbers
\(0<\lambda_1<\cdots<\lambda_m<1\), and define
\[
D_m:=\operatorname{diag}(\lambda_1,\ldots,\lambda_m),\qquad
\widehat A_{q,N}:=A_q\oplus D_m,\qquad
\widehat J_{q,N}:=J_q\oplus I_m.
\]
Then \(\widehat J_{q,N}\) has inertia \((N-q,q)\), and
\(\widehat A_{q,N}\) is \(\widehat J_{q,N}\)-selfadjoint. Since
\(\spec(A_q)=\{0\}\) and \(0<\lambda_j<1\),
\(\spec(\widehat A_{q,N})\subset\D\).

Because \(D_m\) is a Hermitian contraction,
\(\norm{D_m^k}=\lambda_m^k\le1\) for \(k\ge1\), while
\(\norm{D_m^0}=1\). Hence \(\Power(D_m)=1\). Moreover, for \(|z|>1\),
\[
(|z|-1)\norm{(zI-D_m)^{-1}}
=
\max_{1\le j\le m}
\frac{|z|-1}{|z-\lambda_j|}
\le
\frac{|z|-1}{|z|-\lambda_m}
\le1.
\]
The general lower bound \(\Kreiss(D_m)\ge1\) therefore gives
\(\Kreiss(D_m)=1\). It follows from the direct-sum structure that
\[
\Power(\widehat A_{q,N})=\Power(A_q),
\qquad
\Kreiss(\widehat A_{q,N})=\Kreiss(A_q).
\]

The minimal polynomial of \(A_q\) is \(x^{2q+1}\), while
\[
m_{D_m}(x)=\prod_{j=1}^m(x-\lambda_j).
\]
These two polynomials are coprime because each \(\lambda_j\) is nonzero.
The minimal polynomial of a direct sum is the least common multiple of
the minimal polynomials of its blocks. Therefore,
\[
m_{\widehat A_{q,N}}(x)
=
x^{2q+1}\prod_{j=1}^m(x-\lambda_j),
\]
and hence \(d(\widehat A_{q,N})=2q+1+m=N\).

Finally, since \(N\ge2q+1\),
\[
\nu_{\widehat J_{q,N}}(\widehat A_{q,N})
=
\min\{N,2q+1,2(N-q)+1\}
=
2q+1.
\]
The asymptotic limit now follows directly from
Theorem~\ref{thm:sharpness}.
\end{proof}

\section{Additional results}\label{sec:consequences}

The main estimate is stable under two natural transformations: radial scaling of the disk and reduction of a nonsingular Hermitian metric to a fundamental symmetry.

\begin{corollary}\label{cor:geometric-decay}
Under the assumptions of Theorem~\ref{thm:main}, suppose additionally that $\rho(A)<\gamma<1$. Then, for every $k\ge1$,
\[
\norm{A^k}\le\nu_J(A)\left(1+\frac1k\right)^k\Kreiss_\gamma(A)\gamma^k\le e\nu_J(A)\Kreiss_\gamma(A)\gamma^k.
\]
\end{corollary}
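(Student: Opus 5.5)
Apply Theorem~\ref{thm:main} to the rescaled matrix $B:=A/\gamma$. Scaling by the positive real number $1/\gamma$ preserves every structural hypothesis. First, $B^*J=JB$, since $\gamma$ is real. Second, $\spec(B)=\gamma^{-1}\spec(A)\subset\D\subset\Dbar$, because $\rho(A)<\gamma$. Third, $B$ and $A$ have minimal polynomials of the same degree, since $p(A)=0$ if and only if $p(\gamma\,\cdot)(B)=0$. Hence $d(B)=d(A)$, the inertia of $J$ is unchanged, and $\nu_J(B)=\nu_J(A)$.

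Next, identify $\Kreiss(B)$ with $\Kreiss_\gamma(A)$. For $|z|>1$ put $\zeta:=\gamma z$, so that $|\zeta|>\gamma$. Then
\[
(zI-B)^{-1}=\gamma(\zeta I-A)^{-1},
\qquad
(|z|-1)\norm{(zI-B)^{-1}}=\gamma\Bigl(\frac{|\zeta|}{\gamma}-1\Bigr)\norm{(\zeta I-A)^{-1}}=(|\zeta|-\gamma)\norm{(\zeta I-A)^{-1}}.
\]
The map $z\mapsto\gamma z$ is a bijection from $\{|z|>1\}$ onto $\{|\zeta|>\gamma\}$. Taking suprema therefore gives $\Kreiss(B)=\Kreiss_\gamma(A)$, which is finite by the remark following Definition~\ref{def:effective-dimension}.

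Now invoke \eqref{eq:pointwise-main} for $B$. For $k\ge1$ it gives
\[
\gamma^{-k}\norm{A^k}=\norm{B^k}\le\nu_J(A)\Bigl(1+\frac1k\Bigr)^k\Kreiss_\gamma(A).
\]
Multiplying by $\gamma^k$ yields the first inequality, and $(1+1/k)^k\le e$ yields the second.

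The only step that needs care is the scaling identity for the Kreiss constant. The factor $\gamma$ coming from $(zI-B)^{-1}=\gamma(\zeta I-A)^{-1}$ must exactly cancel the factor from $|z|-1=(|\zeta|-\gamma)/\gamma$, and the computation above confirms that it does. The rest is a direct transfer of hypotheses, and the hypothesis $\gamma<1$ is not actually used.
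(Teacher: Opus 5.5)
Your proposal is correct and follows the paper's own argument: rescale to $A/\gamma$, check that $J$-selfadjointness, the spectral condition and $d(A)$ (hence $\nu_J$) are preserved, identify $\Kreiss(A/\gamma)=\Kreiss_\gamma(A)$ by the substitution $z=\gamma w$, then apply Theorem~\ref{thm:main} and multiply by $\gamma^k$. Your remark that the hypothesis $\gamma<1$ is never used is also accurate; only $\gamma>\rho(A)\ge0$ matters.
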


\begin{proof}
The matrix $A/\gamma$ is $J$-selfadjoint and has spectrum in $\D$. With the change of variables $z=\gamma w$,
\[
\begin{aligned}
\Kreiss(A/\gamma)
&=\sup_{|w|>1}(|w|-1)\norm{(wI-A/\gamma)^{-1}}\\
&=\sup_{|z|>\gamma}\frac{|z|-\gamma}{\gamma}\norm{\gamma(zI-A)^{-1}}\\
&=\Kreiss_\gamma(A).
\end{aligned}
\]
Apply Theorem~\ref{thm:main} to $A/\gamma$. Since $d(A/\gamma)=d(A)$, one has $\nu_J(A/\gamma)=\nu_J(A)$. Using $A^k=\gamma^k(A/\gamma)^k$ gives the first estimate. The second follows from $(1+1/k)^k\le e$.
\end{proof}

\begin{definition}\label{def:weighted-kreiss}
Let $G=G^*$ be nonsingular with inertia $(n-q,q)$. By the spectral theorem, write $G=U\operatorname{diag}(\lambda_1,\ldots,\lambda_n)U^*$, where $U$ is unitary and each $\lambda_j$ is real and nonzero. For a nonzero real number $t$, let $\operatorname{sign}(t)=1$ when $t>0$ and $\operatorname{sign}(t)=-1$ when $t<0$. Define
\[
|G|:=U\operatorname{diag}(|\lambda_1|,\ldots,|\lambda_n|)U^*,\qquad
J:=\operatorname{sign}(G):=U\operatorname{diag}(\operatorname{sign}\lambda_1,\ldots,\operatorname{sign}\lambda_n)U^*,
\]
and set $R:=|G|^{1/2}$. Then $|G|$ and $R$ are positive definite, while $J$ is a fundamental symmetry with the same inertia as $G$. Moreover, $R$ and $J$ commute and $G=RJR$. The norm induced by $|G|$ on $\C^n$ and the corresponding operator norm are defined by
\[
\lVert x\rVert_{|G|}:=\norm{Rx},
\qquad
\lVert T\rVert_{|G|}
:=
\sup_{x\ne0}
\frac{\lVert Tx\rVert_{|G|}}{\lVert x\rVert_{|G|}}
=
\norm{RTR^{-1}}.
\]
For $A\in\C^{n\times n}$, define
\[
\Power_{|G|}(A) := \sup_{k\ge0}\lVert A^k\rVert_{|G|} \qquad \text{and} \qquad \Kreiss_{|G|}(A)
:=
\sup_{|z|>1}
(|z|-1)\lVert(zI-A)^{-1}\rVert_{|G|}.
\]
If $\rho(A)<\gamma$, define the scaled weighted Kreiss constant by
\[
\Kreiss_{\gamma,|G|}(A)
:=
\sup_{|z|>\gamma}
(|z|-\gamma)\lVert(zI-A)^{-1}\rVert_{|G|}.
\]
\end{definition}

\begin{corollary}\label{cor:metric}
Let $G=G^*$ be nonsingular with inertia $(n-q,q)$, and suppose that $A^*G=GA$. Set $R:=|G|^{1/2}$ and $J:=\operatorname{sign}(G)$. 
If $\spec(A)\subset\Dbar$, then
\[
\Power_{|G|}(A)\le e\min\{d(A),2q+1,2(n-q)+1\}\Kreiss_{|G|}(A).
\]
If $\rho(A)<\gamma<1$, then, for every $k\ge1$,
\[
\lVert A^k\rVert_{|G|}\le\min\{d(A),2q+1,2(n-q)+1\}\left(1+\frac1k\right)^k\Kreiss_{\gamma,|G|}(A)\gamma^k.
\]
In particular,
\[
\lVert A^k\rVert_{|G|}\le e\min\{d(A),2q+1,2(n-q)+1\}\Kreiss_{\gamma,|G|}(A)\gamma^k.
\]
\end{corollary}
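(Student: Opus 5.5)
The plan is to reduce the weighted statement to Theorem~\ref{thm:main} and Corollary~\ref{cor:geometric-decay} through the similarity $B:=RAR^{-1}$. The weighted norm is by definition $\lVert T\rVert_{|G|}=\norm{RTR^{-1}}$. So for every $k\ge0$ we have $\lVert A^k\rVert_{|G|}=\norm{B^k}$. Since $R(zI-A)^{-1}R^{-1}=(zI-B)^{-1}$, we also have $\lVert(zI-A)^{-1}\rVert_{|G|}=\norm{(zI-B)^{-1}}$. Consequently
\[
\Power_{|G|}(A)=\Power(B),\qquad \Kreiss_{|G|}(A)=\Kreiss(B),\qquad \Kreiss_{\gamma,|G|}(A)=\Kreiss_\gamma(B).
\]
Similarity also preserves the spectrum, the spectral radius and the minimal polynomial. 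Hence $\spec(B)=\spec(A)$, $\rho(B)=\rho(A)$ and $d(B)=d(A)$.

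The one structural step is to show that $B$ is $J$-selfadjoint, where $J=\operatorname{sign}(G)$. Write $G=RJR$ with $R=R^*$ commuting with $J$, as recorded in Definition~\ref{def:weighted-kreiss}. The hypothesis $A^*G=GA$ becomes
\[
A^*RJR=RJRA.
\]
Multiplying by $R^{-1}$ on both sides (left and right) gives $R^{-1}A^*RJ=JRAR^{-1}$. Since $R$ is Hermitian, $B^*=(RAR^{-1})^*=R^{-1}A^*R$, and the identity reads $B^*J=JB$. The matrix $J$ is a fundamental symmetry with the same inertia $(n-q,q)$ as $G$. Therefore $\nu_J(B)=\min\{d(A),2q+1,2(n-q)+1\}$.

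With these facts in hand, the first claim follows by applying \eqref{eq:uniform-main} of Theorem~\ref{thm:main} to $B$ and translating back through the identities above. The second claim follows in the same way from Corollary~\ref{cor:geometric-decay} applied to $B$, using $\rho(B)<\gamma<1$. The final inequality comes from $(1+1/k)^k\le e$.

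I do not expect a real obstacle. The only point needing care is the adjoint computation, which uses that $R$ is Hermitian, together with the commutation $RJ=JR$ that permits writing $G=RJR$. Both properties come from the common spectral decomposition of $|G|$ and $\operatorname{sign}(G)$ by the same unitary $U$.
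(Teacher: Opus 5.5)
Your proposal is correct and follows the paper's proof essentially verbatim. You pass to the similar matrix $B=RAR^{-1}$, you derive $B^*J=JB$ from $A^*RJR=RJRA$ by multiplying with $R^{-1}$ on both sides, and you identify the weighted quantities with the unweighted ones for $B$. You then invoke Theorem~\ref{thm:main} and Corollary~\ref{cor:geometric-decay}, exactly as the paper does.
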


\begin{proof}
By Definition~\ref{def:weighted-kreiss}, $R$ is positive definite, $J$ is a fundamental symmetry with inertia $(n-q,q)$, and $G=RJR$. Set $B:=RAR^{-1}$. The identity $A^*G=GA$ is
\[
A^*RJR=RJRA.
\]
Multiplying this identity on the left and on the right by $R^{-1}$ gives
\[
R^{-1}A^*RJ=JRAR^{-1}.
\]
Since $B^*=R^{-1}A^*R$, it follows that $B^*J=JB$. Thus $B$ is $J$-selfadjoint.

Since $B$ is similar to $A$, one has $\spec(B)=\spec(A)$, $\rho(B)=\rho(A)$, and $d(B)=d(A)$. Since $B^k=RA^kR^{-1}$ for every $k\ge0$, one also has $\Power(B)=\Power_{|G|}(A)$.
For every $z\notin\spec(A)$, the identity $(zI-B)^{-1}=R(zI-A)^{-1}R^{-1}$ gives $\Kreiss(B)=\Kreiss_{|G|}(A)$ and $\Kreiss_\gamma(B)=\Kreiss_{\gamma,|G|}(A)$.
The unscaled estimate follows from Theorem~\ref{thm:main} applied to $B$. The scaled pointwise estimate follows from Corollary~\ref{cor:geometric-decay} applied to $B$. The final estimate follows from $(1+1/k)^k\le e$.
\end{proof}

\section*{Declaration of competing interest}
The authors declare no competing interests.

\section*{Data availability}
No data were used for the research described in this article.

\section*{Funding}
This work is supported by VinUni's grant VUNI.2526.CAIR.04 and Seed Fund 10000225 CECS.

\bibliographystyle{plainnat}
\bibliography{references}

\end{document}